\newtheorem{theorem}{Theorem}[section]           
\newtheorem{lemma}[theorem]{Lemma}               
\newtheorem{corollary}[theorem]{Corollary}
\newtheorem{prop}[theorem]{Proposition}
\theoremstyle{definition}
\newtheorem{remark}[theorem]{Remark}
\newcommand{\obp}{\otimes^{\gamma}}
\newcommand{\N}{\mathbb N}
\def\la{\langle}
\def\ra{\rangle}
\begin{document}

\title[Arens regularity of projective tensor product of Schatten
  spaces]{On Arens regularity of projective tensor product of Schatten
  spaces}

\author{Lav Kumar Singh}

\address{Lav Kumar Singh\\ School of physical Sciences, Jawaharlal
  Nehru University, New Delhi }

\email{lav17\_sps@jnu.ac.in; lavksingh@hotmail.com}
\keywords{ Banach algebras, Arens regularity, Schatten class operators, projective
  tensor product, Schur product}

\subjclass{47B10, 46H20}

\thanks{This research work was carried out with the financial support
  from the Counsil of Scientific and Industrial Research (Government
  of India) through a Junior Research Fellowship with
  No. \bf 09/263(1133)/2017-EMR-I}

\begin{abstract}
       We discuss Arens regularity of the projective tensor product of
       Banach algebras consisting of Schatten class operators. More
       precisely, exploiting \"Ulger's biregularity criterion, we
       prove that  for any Hilbert space $\mathcal H$, the Banach
       alegbras $S_p(\mathcal H)\otimes^\gamma S_q(\mathcal H)$ and
       $B(S_2(\mathcal H))\otimes^\gamma S_2(\mathcal H)$ are not
       Arens regular for every pair $1 \leq p, q \leq 2$; whereas,
       when $\mathcal H$ is separable and $S_2(\mathcal H)$ is
       equipped with the Schur product, then $S_2(\mathcal
       H)\otimes^\gamma S_2(\mathcal H)$ turns out to be Arens
       regular.
\end{abstract}
\maketitle
\section{Introduction}
       For any normed algebra $A$, Arens (in \cite{Arens}) defined two
       products $\Box$ and $\diamond$ on $A^{**}$ such that each
       product makes $A^{**}$ into a Banach algebra and the canonical
       isometric inclusion $J:A\to A^{**}$ becomes a homomorphism with
       respect to both the products. The normed algebra $A$ is said to
       be \textit{Arens regular} if the two products $\Box$ and
       $\diamond$ agree, i.e. $f\Box g=f\diamond g$ for all $f,g\in
       A^{**}.$ Over the years, people have witnessed some significant
       connection between Arens regularity and certain geometric
       properties of Banach spaces - see, for instance, \cite{LU,
         Neufang} and the references therein.

It is known that operator algebras and reflexive Banach algebras are
all Arens regular - see \cite{Palacios} and
\cite[1.4.2]{Palmer}. In particular,
$S_p(\mathcal H)$, the Banach algebra consisting of Schatten $p$-class
operators, being reflexive for $1< p< \infty$, is Arens regular. And,
even though $S_1(\mathcal H)$ is not reflexive, it is still Arens
regular - a proof can be found in \cite{Arikan}.

Given the importance of tensor products in the theory of Banach
algebras, among the various algebraic properties that people consider,
it is quite natural to analyze the Arens regularity of the projective
tensor product of Banach algebras. Some important results in this
direction appear, for instance, in \cite{Ulger, GI, Neufang, KR}
       
In fact, very recently, in \cite{Neufang}, Neufang settled a 40 years
old proplem regarding Arens regularity of the Varapolous algebra
$C(X)\obp C(Y)$. Actually, along with some other important results, he
proved, more generally, that for $C^*$-algebras $A$ and $B$, $ A \obp
B$ is Arens regular if and only if $A$ or $B$ has the Phillips
property. Prior to Neufang's result, \"Ulger, in his pioneering work
\cite{Ulger} in this direction, had shown that when $\ell^p$ is
equipped with pointwise multiplictaion, then $\ell^p\obp \ell^q$ is
Arens regular for $1 < p, q < \infty$. It was this setup that
motivated us to ask the same question for the non-commutative analogue
of $ \ell^p$ spaces, namely, the Banach algebra $S_p (\mathcal H)$
consisting of Schatten $p$-class operators on a Hilbert space
$\mathcal H$. Unlike the $\ell^p$ spaces, it turns out that the
projective tensor product of Schatten spaces is not Arens regular. We
achieve this by exploiting one of \"Ulger's technique from
\cite{Ulger} to show that $B(\mathcal K)\otimes^\gamma \mathcal K$ and
$S_p(\mathcal H)\otimes^\gamma S_q(\mathcal H)$ are not Arens regular
for $1\leq p, q \leq 2$, where $\mathcal K:= S_2(\mathcal H)$ for any
Hilbert space $\mathcal{H}$.\bigskip

Further, keeping one leg as a $C^*$-algebra, we show in
\Cref{BK-obp-K} that the Banach aglebra $B(S_2(\mathcal{H}))\obp
S_2(\mathcal{H})$ is not Arens regular. And then we deduce further
that the Banach algebras, $B_0(S_2(\mathcal{H}))\obp
S_2(\mathcal{H})$, $B(\mathcal K)\obp S_p(\mathcal H)$ and
$B_0(\mathcal K)\obp S_p(\mathcal H)$ and are all Arens irregular for
$1 \leq p \leq 2$.

On the other hand, when $\mathcal H$ is separable and $ S_2(\mathcal
H)$ is equipped with the Schur product, then, in the final section, we
show that the commutative Banach algebra $S_p(\mathcal
H)\otimes^\gamma S_q(\mathcal H)$ is Arens regular.

\section{Preliminaries}
\subsection{Schatten spaces} \( \)

       Let $\mathcal H$ be a Hilbert space and $T
   \in B(\mathcal H)$. For $1 \leq p < \infty$, the Schatten $p$-norm
   of $T$ is given by
   \[
   ||T||_p= \operatorname{Tr}(|T|^p),
   \]
where $\operatorname{Tr}$ denotes the canonical semi-finite positive trace on
$B(\mathcal H)_+$.  The {\em Schatten $p$-class operators} on
$\mathcal{H}$ are those $T\in B(\mathcal H)$ for which $||T||_p
<\infty$ and
   \[
   S_p(\mathcal H):= \{ T \in B(\mathcal{H}) : \|T\|_p  < \infty\}.
   \]
   $S_p(\mathcal H)$ is known to be an ideal in $B(\mathcal H)$ and
   $||\cdot||_p$ is a norm on $S_p(\mathcal H)$ which makes it a Banach
   $*$-algebra (with canonical adjoint involution). Whenever
   $1\leq p < q$, we have the inequality $||\cdot||_q\leq ||\cdot
   ||_p$, and hence the containment $S_p(\mathcal H)\subset
   S_q(\mathcal H)$.  Operators in $S_p(\mathcal H)$ are compact and
   $S_p(\mathcal H)$ contains all finite rank operators.  Detailed
   discussion about these facts can be found in
   \cite{Palmer}.

\subsection{Arens regularity}
       Let $A$ be a normed algebra. For the sake of convenience, we
       quickly recall the definitions of the two products $\Box$ and
       $\diamond$ mentioned in the Introduction.  For $a \in A$,
       $\omega\in A^*$, $f\in A^{**}$, consider the functionals
       $\omega_a, {}_{a}\omega\in A^*$, $\omega_f,{}_f\omega\in
       A^{**}$ given by $w_a=(L_a)^*\omega$,
       ${}_a\omega=(R_a)^*\omega$; $\omega_f(a)=f({}_a\omega)$ and
       ${}_f\omega(a)=f(\omega_a)$. Then, for $f,g\in A^{**}$ the
       operations $\Box$ and $\diamond$ are given by $(f\Box g)
       (\omega)= f({}_g\omega)$ and $(f\diamond
       g)(\omega)=g(\omega_f)$ for all $\omega \in A^*$. As recalled
       in the Introduction, $A$ is said to be Arens regular if the
       products $\Box$ and $ \diamond$ are same on $A^{**}$.
       
       Thanks to \"Ulger, there is a very useful equivalent
       characterization of Arens regularity in terms of some
       properties of bilinear maps.
       
       A bounded bilinear form $m:X\times Y\to Z$, where $X,Y$ and $Z$ are
       normed spaces, is called {\em Arens regular} if the induced bilinear forms
       $m^{***}:X^{**}\times Y^{**}\to Z^{**}$ and $m^{r***r}:X^{**}\times
       Y^{**}\to Z^{**}$ are same, where $m^*:Z^*\times X\to Y^*$ is given by
       $m^*(f,x)(y)=\langle f,m(x,y)\rangle$, $m^r:Y\times X\to Z$ is
       given by $m^r(y,x)=m(x,y)$, $m^{**}:=(m^*)^*$ and so on.
       
       A normed algebra $A$ is known to be Arens regular if and only
       if the multiplication $A \times A \to A$ is Arens regular in
       the above sense.

       \subsection{Projective tensor product and its Arens reguality}$\ $
       Let $A$ and $B$ be Banach algebras. There is a natural
       multiplication on their algebraic tensor product $A \otimes B$
       given by $(x_1\otimes y_1)(x_2\otimes y_2)=x_1x_2\otimes
       y_1y_2$ and extended appropriately to all tensors. And there
       are various ways to impose a normed algebra structure on $A
       \otimes B$.
       
       We will be primarily interested in the Banach space projective tensor
       product $\obp$.
       
       For $u \in A\otimes Y$, its projective norm is given by
       \[
       \Vert u \Vert_\gamma =\inf\left\{\sum_{i=1}^{n}\Vert x\Vert
       \Vert y\Vert~:~u=\sum_{i=1}^{n}x_i\otimes y_i\right\}.
       \]
       This norm turns out to be a \textit{cross norm}, i.e., $\|x
       \otimes y\|_{\gamma} = \|x\| \|y\|$ and the completion of the
       normed algebra $A\otimes B$ with respect to this norm is a
       Banach algebra and is denoted by $A\otimes^\gamma B$.  \"Ulger
       gave a very useful characterization of Arens regularity of the
       projective tensor product of Banach algebras in terms of
       biregularity of bilinear forms.
       
       Recall that, for Banach algebras $A$ and $B$, a bilinear form
       $m: A \times B \rightarrow \mathbb C$ is said to be {\em
         biregular} if for any two pairs of sequences $(a_i),
       (\tilde{a}_j)$ and $(b_i), (\tilde{b}_j)$ in the closed unit
       balls of $A$ and $B$, respectively, one has
       \begin{equation}\label{double-limits}
       \lim_i \lim_j m(a_i \tilde{a}_j, b_i \tilde{b}_j) = \lim_j
       \lim_i m(a_i \tilde{a}_j, b_i \tilde{b}_j)
       \end{equation}
       provided that these limits exist - see \cite[Definition
         3.1]{Ulger}.  Our results will depend heavily on the
       following characterization provided by \"Ulger.
       \begin{theorem}\cite{Ulger}\label{ulger}
       	Let $A$ and $B$ be  Banach algebras. Then, their
       	projective tensor product $A \obp B$ is Arens regular if and only if
       	every bilinear form $m: A \times B \rightarrow \mathbb C$ is biregular.
       \end{theorem}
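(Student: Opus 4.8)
The plan is to translate Arens regularity into Grothendieck's iterated–limit criterion and then exploit the geometry of the projective tensor product. I would use two standard reductions. First, as recorded in the preliminaries, a Banach algebra $C$ is Arens regular precisely when its multiplication $M\colon C\times C\to C$ is an Arens regular bilinear map, and a $Z$-valued bilinear map $M$ is Arens regular if and only if the scalar form $\langle z^*,M(\cdot,\cdot)\rangle$ is Arens regular for every $z^*\in Z^*$. Second, a bounded scalar bilinear form $\psi\colon C\times C\to\mathbb C$ is Arens regular exactly when it satisfies the iterated–limit condition: for all bounded sequences $(c_n),(d_k)$ in $C$ one has $\lim_n\lim_k\psi(c_n,d_k)=\lim_k\lim_n\psi(c_n,d_k)$ whenever both sides exist. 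Applying these to $C=A\obp B$ and using the canonical identification of $(A\obp B)^*$ with the bounded bilinear forms $m\colon A\times B\to\mathbb C$, the quantity $\langle m,uw\rangle$ is computed by expanding $u$ and $w$ into elementary tensors and summing $m$ over the resulting products.

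The forward implication is then immediate. Assuming $A\obp B$ is Arens regular, fix a bilinear form $m$ and pairs $(a_i),(\tilde a_j)$ and $(b_i),(\tilde b_j)$ in the unit balls of $A$ and $B$. Setting $u_i=a_i\otimes b_i$ and $w_j=\tilde a_j\otimes\tilde b_j$, these lie in the unit ball of $A\obp B$ because $\|\cdot\|_\gamma$ is a cross norm. Since $u_iw_j=(a_i\tilde a_j)\otimes(b_i\tilde b_j)$, we get $\langle m,u_iw_j\rangle=m(a_i\tilde a_j,\,b_i\tilde b_j)$, so the iterated–limit condition for the form $(u,w)\mapsto\langle m,uw\rangle$ is exactly the biregularity identity \eqref{double-limits}.

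For the converse I would recast Arens regularity as a weak–compactness statement: the form $(u,w)\mapsto\langle m,uw\rangle$ is Arens regular if and only if the associated operator $L_m\colon A\obp B\to(A\obp B)^*$, given by $L_m(u)=[\,w\mapsto\langle m,uw\rangle\,]$, is weakly compact. The key geometric input is that the closed unit ball of $A\obp B$ is the closed convex hull of the elementary tensors $\{a\otimes b:\|a\|\le1,\ \|b\|\le1\}$. Since $L_m(a\otimes b)$ is the functional $(\tilde a,\tilde b)\mapsto m(a\tilde a,b\tilde b)$, biregularity of $m$ asserts precisely that the family $\{L_m(a\otimes b)\}$ passes Grothendieck's double–limit test, hence is relatively weakly compact; the Krein--\v{S}mulian theorem then promotes this to weak compactness of its closed convex hull, which contains $L_m(B_{A\obp B})$, giving weak compactness of $L_m$ and thus Arens regularity of $A\obp B$.

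The main obstacle, and the step I expect to demand the most care, is exactly this passage from elementary tensors to arbitrary elements of $A\obp B$. A general bounded sequence in $A\obp B$ need not admit any uniform finite–rank approximation, so one cannot simply truncate the tensor expansions and sum the biregularity identity term by term: the double sums produced by the multiplication entangle the two limit indices. Routing the argument through weak compactness and Krein--\v{S}mulian (rather than direct estimation) is what circumvents this on the domain side, while certifying relative weak compactness of $\{L_m(a\otimes b)\}$ against \emph{all} bounded test sequences — upgrading the interchange of limits from elementary to arbitrary test tensors — is the genuine technical heart, which I would resolve by Grothendieck's criterion together with a diagonal passage to subsequences ensuring that all the relevant iterated limits exist simultaneously.
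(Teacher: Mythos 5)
The paper offers no proof of this statement: it is quoted directly from \"Ulger \cite{Ulger}, and the surrounding text only uses it as a black box. Your proposal is, in outline, a faithful reconstruction of \"Ulger's own argument — the forward direction via the elementary tensors $a_i\otimes b_i$, $\tilde a_j\otimes \tilde b_j$ and the cross-norm property is exactly right, and the converse via the chain (Arens regularity of $A\obp B$ $\Leftrightarrow$ weak compactness of each operator $L_m$; the unit ball of $A\obp B$ as the closed convex hull of the elementary tensors; Grothendieck's double-limit criterion tested against that norming set; Krein--\v{S}mulian) is precisely the route of \cite{Ulger}. The one caveat is that two of your intermediate equivalences are themselves nontrivial theorems rather than routine reductions — the identification of Arens regularity of a bounded bilinear form with weak compactness of its associated operator is the main result of \cite{Ulger2}, and the double-limit criterion must be invoked in its ``norming subset'' form — so the blueprint is correct provided these are cited explicitly rather than treated as obvious.
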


\section{Arens regularity of some bilinear maps}

We will see ahead that $A \obp
  B$ is Arens regular if $A$ is so and $B$ is finite
  dimensional, but, prior to that, we observe that, for the space of
  Hilbert-Schmidt operators, some specific bilinear maps behave well
  when one leg is finite dimensional, as in:

\begin{prop}\label{finite-biregular}
  Let $\mathcal K_1=S_2(\mathcal H_1)$ and $\mathcal K_2=S_2(\mathcal
  H_2)$, where $\mathcal H_1$ and $\mathcal H_2$ are two Hilbert
  spaces. Suppose $\mathcal{H}_1$ or $\mathcal{H}_2$ is finite
  dimensional and $m:\mathcal K_1\times \mathcal K_2\to \mathbb C$ is
  a bilinear form. Then, for every pair of sequences $\{S_i\},\{\tilde
  S_j\}$ and $\{T_i\},\{\tilde T_j\}$ in the unit balls of
  $\mathcal K_1$ and $\mathcal K_2$, respectively, if the iterated
  double limits (as on both sides of \Cref{double-limits}) exist then
  they are equal to $m(S\tilde S, T\tilde T)$, where $S,\tilde S$ and
  $T,\tilde T$ are some weak limits of the correspondig sequences.

  In particular, every bilinear form $m:\mathcal K_1\times
  \mathcal K_2\to \mathbb C$ is biregular.

\end{prop}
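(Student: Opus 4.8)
The plan is to reduce to the case where $\mathcal H_2$ is finite dimensional (the other case being symmetric, with the roles of the two legs interchanged), so that $\mathcal K_2$ is finite dimensional while both $\mathcal K_1$ and $\mathcal K_2$ are Hilbert spaces under the Hilbert--Schmidt inner product $\langle A,B\rangle_2=\operatorname{Tr}(B^*A)$, and in particular reflexive. Since the closed unit ball of the reflexive space $\mathcal K_1$ is weakly sequentially compact and that of the finite dimensional space $\mathcal K_2$ is norm compact, I would first pass to a subsequence along which $S_i\rightharpoonup S$ and $\tilde S_j\rightharpoonup\tilde S$ weakly in $\mathcal K_1$, while $T_i\to T$ and $\tilde T_j\to\tilde T$ in norm in $\mathcal K_2$. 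Because replacing the sequences by subsequences leaves any already existing iterated limit unchanged, it then suffices to evaluate the two iterated limits along these subsequences and to check that both equal $m(S\tilde S,T\tilde T)$.

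The main device is a factorization of $m$ through the finite dimensional leg. Fixing a basis $\{g_k\}_{k=1}^N$ of $\mathcal K_2$ with dual functionals $\{f_k\}$ and representing each bounded functional $S\mapsto m(S,g_k)$ via the Riesz theorem as $\operatorname{Tr}(R_k^*S)$ for suitable $R_k\in\mathcal K_1$, one obtains $m(S,T)=\sum_{k=1}^N f_k(T)\,\operatorname{Tr}(R_k^*S)$. The crux is then to understand $\lim_j\operatorname{Tr}(R_k^*S_i\tilde S_j)$ for fixed $i$. Here I would use that the product of two Hilbert--Schmidt operators is trace class to rewrite this quantity as the Hilbert--Schmidt inner product $\langle\tilde S_j,\,S_i^*R_k\rangle_2$; since only $\tilde S_j$ varies, the frozen factor $S_i^*R_k$ serves as a fixed test vector, so weak convergence $\tilde S_j\rightharpoonup\tilde S$ gives $\operatorname{Tr}(R_k^*S_i\tilde S)$. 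Combined with $f_k(T_i\tilde T_j)\to f_k(T_i\tilde T)$ coming from norm convergence in $\mathcal K_2$, this yields $\lim_j m(S_i\tilde S_j,T_i\tilde T_j)=m(S_i\tilde S,T_i\tilde T)$.

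Taking the outer limit $\lim_i$ proceeds identically: after freezing $\tilde S$, one rewrites $\operatorname{Tr}(R_k^*S_i\tilde S)=\langle S_i,\,R_k\tilde S^*\rangle_2$ and invokes $S_i\rightharpoonup S$ together with $T_i\to T$ to land on $m(S\tilde S,T\tilde T)$. Running the same computation with the indices $i$ and $j$ interchanged shows that the reversed iterated limit equals the same number, which simultaneously establishes the displayed limit identity and, since both iterated limits in \Cref{double-limits} then exist and agree, the biregularity of $m$. I expect the only real obstacle to be that operator multiplication is not weakly continuous, so that a direct joint passage to the limit is illegitimate; the resolution is exactly the iterated structure, which lets me vary a single Hilbert--Schmidt factor at a time and absorb the frozen factor into the Hilbert--Schmidt functional, while the finite dimensionality of the second leg collapses its contribution to innocuous norm convergence.
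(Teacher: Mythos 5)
Your argument is correct, and its analytic skeleton coincides with the paper's: pass to subsequences using weak sequential compactness of the unit ball of $\mathcal K_1$ and norm compactness of the unit ball of the finite-dimensional $\mathcal K_2$; take the iterated limits one variable at a time; and, at the crucial step, absorb the frozen Hilbert--Schmidt factor into the test vector via trace cyclicity, so that $\operatorname{Tr}(R^*S_i\tilde S_j)=\langle \tilde S_j, S_i^*R\rangle_2$ for the inner limit and $\operatorname{Tr}(R^*S_i\tilde S)=\langle S_i, R\tilde S^*\rangle_2$ for the outer one --- these are exactly the paper's claims (C3) and (C5). Where you genuinely differ is in how the finite-dimensionality of the second leg is exploited. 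The paper represents $m(S,T)=\langle T,\varphi(S)\rangle$ through a conjugate-linear map $\varphi:\mathcal K_1\to\mathcal K_2$ and expands the trace over an orthonormal basis of $\mathcal H_2$, which forces it to also verify that $\varphi$ carries weak convergence in $\mathcal K_1$ to norm convergence in $\mathcal K_2$ (claims (C4) and (C6), where finite-dimensionality enters a second time). You instead expand $m$ once and for all as a finite sum $m(S,T)=\sum_{k=1}^N f_k(T)\operatorname{Tr}(R_k^*S)$ over a basis of $\mathcal K_2$ and its dual functionals, which localizes the use of finite-dimensionality to a single up-front factorization and reduces the whole computation to scalar sequences; this dispenses with the auxiliary continuity claims about $\varphi$ and makes the limit interchanges more transparent. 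Both routes prove the same statement with the same key mechanism; yours is somewhat leaner in its bookkeeping.
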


  \begin{proof}
First, note that, due to Reisz Representation Theorem, there exists a
conjugate linear continuous map $\varphi:\mathcal K_1\to \mathcal K_2$
such that $m(S,T)=\left<T,\varphi(S)\right>$ for all $S$, $T$.

    Suppose that $\mathcal H_2$ is a finite dimentional Hilbert space
    with an orthonormal basis $\{e_r\}_{r=1}^n$. Let $\{S_i\},\{\tilde
    S_j\}$ and $\{T_i\},\{\tilde T_j\}$ be a two pairs of sequences in
the unit balls of $\mathcal K_1$ and $\mathcal K_2$, respectively, such
    that the iterated limits $\lim_i\lim_j m(S_i\tilde S_j,T_i\tilde
    T_j)$ and $\lim_j\lim_i m(S_i\tilde S_j,T_i\tilde T_j)$ exist. Then, we have
	\begin{equation}\label{two-limits}
	\begin{split}
	\lim_i\lim_j m(S_i\tilde S_j,T_i\tilde T_j)&=\lim_i\lim_j
        \left<T_i\tilde T_j,\varphi(S_i\tilde
        S_j)\right>\\ &=\lim_i\lim_j ~\text{Tr}\left(\varphi(S_i\tilde
        S_j)^* T_i\tilde T_j\right)\\ &=\lim_i\lim_j
        \sum_{r=1}^n\left<T_i\tilde T_j e_r,\varphi(S_i\tilde
        S_j)e_r\right>.
	\end{split}
	\end{equation}
        
	Since the unit ball of a Hilbert space is weakly compact and
        the unit ball of the finite dimentional space $\mathcal K_2$
        is compact, the sequences $\{S_i\},\{\tilde S_j\}$ and
        $\{T_i\},\{\tilde T_j\}$ have weakly and norm convergent
        convergent subsequences, respectively.  Let
        $\{S_{i_k}\},\{\tilde S_{j_l}\}$ and $\{T_{i_k}\},\{\tilde
        T_{j_l}\}$ be the corresponding pair of subsequences
        converging to $S,\tilde S$ and $T,\tilde T$ in weak*
        (equivalently, weak) topology and norm topology on $\mathcal
        K_1$ and $\mathcal K_2$, respectively. Then, we obtain
	\begin{equation}\label{E6}
	\lim_i\lim_j m(S_i\tilde S_j,T_i\tilde
	T_j)=\lim_{i_k}\lim_{j_l} \sum_{r=1}^n\left<T_{i_k}\tilde
	T_{j_l} e_r,\varphi(S_{i_k}\tilde
	S_{j_l})e_r\right>. 
	\end{equation}
        
	Hence, without loss of generality, we may assume that
        $\{S_i\}\overset{w}{\to} S$, $\{\tilde
        S_j\}\overset{w}{\to}\tilde S$ in the weak topology of
        $\mathcal K_1$ and $\{T_i\}\to T$, $\{\tilde T_j\}\to \tilde
        T$ in the norm (HS) topology of $\mathcal K_2$.  The limits and
        summation are interchangeable because summation is over finite
        index set. Hence, we have
	\begin{equation} \label{E7}
	\lim_i\lim_j m(S_i\tilde S_j,T_i\tilde
	T_j)=\sum_{r=1}^n\lim_{i}\lim_{j}\left<T_{i}\tilde T_{j}
	e_r,\varphi(S_{i}\tilde S_{j})e_r\right>.
	\end{equation}

	Next, we make few claims, whose proofs will be provided later:\\
	{\bf \underline{Claims:}}

	\begin{enumerate}[label=(C\arabic*)]
		\item $\tilde T_{j}e_r \to \tilde Te_r $ in norm for
                  every $r$.
		\item $T_{i}\tilde T_{j}e_r \to T_{i}\tilde Te_r$ in
                  norm for every $i$ and $r$.
		\item $S_{i}\tilde S_{j} \overset{w}{\to} S_{i}\tilde
                  S$ for every $i$.
		\item $\varphi(S_{i}\tilde S_{j})e_r \to
                  \varphi(S_{i}\tilde S)e_r$ in norm for every $r$ and
                  $i$.
		\item $S_i\tilde S \to S \tilde S$ weakly.
		\item $\varphi(S_i\tilde S)e_r\to \varphi(S\tilde
                  S)e_r$ in norm for every $r$.
	\end{enumerate}

	Using (C2) and (C4) and the fact that every inner product is
        continuous in both cordinates (with respect to the norm
        topology), \Cref{E7} gives us
	\begin{equation} \label{E8}
	\begin{split}
	\lim_i\lim_j m(S_i\tilde S_j,T_i\tilde
	T_j)&=\sum_{r=1}^n\lim_i\left<\lim_{j}T_{i}\tilde
	T_{j}e_r,\lim_{j}\varphi(S_{i}\tilde S_{j})e_r
	\right>\\
        &=\sum_{r=1}^n \lim_i\left<T_i\tilde T
	e_r,\varphi(S_i\tilde S)e_r\right>.
	\end{split}
	\end{equation}
	Using the continuity of operators $T_i$ and (C6) combined with
        the norm continuity of inner product in \Cref{two-limits}, we
        obtain
	\begin{equation*}
	\begin{split}
	\lim_i\lim_j m(S_i\tilde S_j,T_i\tilde
	T_j)&=\sum_{r=1}^n\left<\lim_i T_i\tilde Te_r,\lim_i
	\varphi(S_i\tilde S)e_r\right>\\&=\sum_{r=1}^n\left<T\tilde
	Te_r,\varphi(S\tilde S)e_r\right>\\&=m(S\tilde S,T\tilde T).
	\end{split}
	\end{equation*}
	Repeating the same process with limits interchanged, we obtain
	\begin{equation*}
	\begin{split}
	\lim_j\lim_i m(S_i\tilde S_j,T_i\tilde
        T_j)&=\sum_{r=1}^n\left<T\tilde Te_r,\varphi(S\tilde
        S)e_r\right>\\&=m(S\tilde S,T\tilde T).
	\end{split}
	\end{equation*} 
This also proves that the bilinear form $m$ is biregular.\smallskip

We now prove the claims made above:\smallskip

(C1) Since $(\tilde T_{j_k}-\tilde T)\to 0$ in the norm topology of
$\mathcal K_2$, it must follow that $(\tilde T_{j_k}-\tilde T)\to 0$
in the operatorn norm of $B(\mathcal H_2)$(because Hilber-Schmidt norm
dominates the operator norm). Hence $\tilde T_{j_k}e_\alpha\to \tilde
T e_r$ for each $r$.

(C2) Follows from the sequential
continuity of $T_{i_k}$ as a bounded operator and C1.

(C3) Since $\tilde S_{j_k}\overset{w}{\to}\tilde S$, we have
$\left<\tilde S_{j_k}, P\right>\to \left<\tilde S,P\right>$ for each
$P\in \mathcal K_1$.  Hence $\left<\tilde S_{j_k}, S_{i_k}^*P \right>
\to \left<\tilde S,S_{i_k}^*P\right>$; so that $\left< S_{i_k}\tilde
S_{j_k}, P \right> \to \left<S_{i_k}\tilde S,P\right>$ for each $P\in
\mathcal K_1$.  Hence $S_{i_k}\tilde S_{j_k}\overset{w}{\to}
S_{i_k}\tilde S$.

(C4) Any bounded linear operator between Hilbert spaces is weak-weak
continuous and weak topology on $\mathcal K_2$ is nothing but norm
topology (due to $\mathcal K_2$ being finite dimensional), hence due
to C3 we obtain $\varphi(S_{i_k}\tilde S_{j_k})\to
\varphi(S_{i_k}\tilde S)$ in norm of $\mathcal K_2$. And hence we have
the following norm convergence in $\mathcal
H_2$. $$\varphi(S_{i_k}\tilde S_{j_k})e_\alpha\to
\varphi(S_{i_k}\tilde S)e_\alpha $$

(C5) Consider $\left<S_i\tilde S,
P\right>=\operatorname{Tr}(P^*S_i\tilde S)$ for some $P\in\mathcal
K_1$. We know that product of two Hilbert Schmidt operators is a trace
class operator. Hence $P^*S_i\in S_1(\mathcal H_1)$. Also for any
$A\in \mathcal S_1(H)$ and $B\in\mathcal B(H)$, we have
$\operatorname{Tr}(AB)=\operatorname{Tr}(BA).$ So,
\[
	\left< S_i\tilde S,P\right> = \operatorname{Tr}(P^*S_i\tilde S)=\operatorname{Tr}(\tilde SP^*S_i) \left<S_i,P\tilde S^*\right>.
\]
And, since $S_i\overset{w}{\to}S$, we see that
\[
\left<S_i,P\tilde
	S^*\right>\to \left<S,P\tilde
	S^*\right>=\operatorname{Tr}(\tilde
	SP^*S)=\operatorname{Tr}({P^*S\tilde S})=\left<S\tilde
	S,P\right>.
        \]
        Hence, $\left<S_i\tilde S,P\right>\to
	\left<S\tilde S,P\right>$ for each $P\in \mathcal
	K_1$, i.e., $S_i\tilde S\overset{w}{\to}S\tilde S .$

        (C6) This
	again follows from (C5) and the weak-norm continuity of
	$\varphi$.
\end{proof}

The following type of Bilinear forms play an important role in upcoming results in the next section. We first note in the next theorem that these maps are Arens regular. Later on we will show that when $\mathcal H$ is replaced by Hilbert space of Hilbert-Schmidt operators, then such forms, though Arens regular, are not "Biregular" in general.  
\begin{prop}\label{T1}
	Let $\mathcal H$ be a Hilbert space and $l :B(\mathcal H)\times
	\mathcal H\to \mathcal H$ be the bounded bilinear map given by
	$l(T,\zeta)=T(\zeta) $. Then, $l$ is Arens regular. Also, the bilinear form $m:B(\mathcal H)\times \mathcal H\to\mathbb C$, defined as $m(T,\zeta)=\left<T(\zeta),\beta\right>$ for some $\beta\in \mathcal H$, is Arens regular.
\end{prop}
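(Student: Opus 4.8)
The plan is to dispose of the scalar form $m$ first and then lift the conclusion to the vector-valued map $l$ by composing with functionals. For $m$, I would rewrite $m(T,\zeta)=\langle T\zeta,\beta\rangle=\langle\zeta,T^{*}\beta\rangle$ and verify the iterated-limit criterion for Arens regularity of a scalar bilinear form (exactly in the spirit of \Cref{finite-biregular}). Given norm-bounded sequences $\{T_i\}\subset B(\mathcal H)$ and $\{\zeta_j\}\subset\mathcal H$, I set $\eta_i:=T_i^{*}\beta$, a bounded sequence in $\mathcal H$, so that $m(T_i,\zeta_j)=\langle\zeta_j,\eta_i\rangle$. Since $\mathcal H$ is reflexive, its closed unit ball is weakly compact, so after passing to subsequences I may assume $\zeta_j\rightharpoonup\zeta$ and $\eta_i\rightharpoonup\eta$ weakly; then both $\lim_i\lim_j\langle\zeta_j,\eta_i\rangle$ and $\lim_j\lim_i\langle\zeta_j,\eta_i\rangle$ collapse to $\langle\zeta,\eta\rangle$. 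A routine subsequence argument then forces the iterated limits of the original sequences to agree whenever they exist, giving Arens regularity of $m$. Equivalently, the associated operator $T\mapsto m(T,\cdot)$ takes values in the reflexive space $\mathcal H^{*}$ and is therefore weakly compact, which is the classical criterion for Arens regularity of a bilinear form.

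For the map $l$, the plan is to reduce its Arens regularity to that of all its scalar slices $\phi\circ l$, $\phi\in\mathcal H^{*}$. By the Riesz theorem each such $\phi$ equals $\langle\cdot,\beta\rangle$ for some $\beta$, so $\phi\circ l$ is precisely a form $m$ of the kind just shown to be Arens regular. The bridge I would establish, by unwinding the definitions of the iterated adjoints $l^{*},l^{**},l^{***}$, is the identity
\[
\langle l^{***}(F,G),\phi\rangle=(\phi\circ l)^{***}(F,G),\qquad F\in B(\mathcal H)^{**},\ G\in\mathcal H^{**},
\]
together with its reversed analogue $\langle l^{r***r}(F,G),\phi\rangle=(\phi\circ l)^{r***r}(F,G)$. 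Granting these, $l^{***}(F,G)$ and $l^{r***r}(F,G)$ are elements of $\mathcal H^{**}$ that agree on every $\phi\in\mathcal H^{*}$, hence are equal; as $F,G$ are arbitrary this yields $l^{***}=l^{r***r}$, i.e.\ $l$ is Arens regular.

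I expect the adjoint identity to be the main (though not deep) obstacle, since it demands careful bookkeeping along the chain of transposes. Concretely, one checks that $l^{*}(\phi,\cdot)$ coincides with the first adjoint of $\phi\circ l$, that consequently $l^{**}(G,\phi)=(\phi\circ l)^{**}(G)$ in $B(\mathcal H)^{*}$, and finally that pairing with $F$ produces $(\phi\circ l)^{***}(F,G)$; the reversed identity follows by applying the same computation to $l^{r}$, using $(\phi\circ l)^{r}=\phi\circ l^{r}$. The only genuinely analytic ingredient anywhere is the reflexivity of $\mathcal H$, exploited for the scalar form, while the passage to $l$ is purely formal.
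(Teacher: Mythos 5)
Your proof is correct, but it runs in the opposite direction from the paper's and uses different machinery. The paper treats $l$ first, by a direct unwinding of the adjoints: since $\mathcal H$ is reflexive, every $F\in\mathcal H^{**}$ is $J_\xi$ for some $\xi\in\mathcal H$ and every $f\in\mathcal H^{*}$ is $\langle\cdot,\eta\rangle$, and a short computation shows that $l^{***}(V,F)(f)$ and $l^{r***r}(V,F)(f)$ both equal the same explicit value $\langle V,\langle l(\cdot,\xi),\eta\rangle\rangle$ for arbitrary $V\in B(\mathcal H)^{**}$; the scalar form $m$ is then dispatched as ``similar''. You instead settle $m$ first, via the weak-compactness/double-limit criterion for scalar forms (the associated operator $T\mapsto\langle\cdot,T^{*}\beta\rangle$ lands in the reflexive space $\mathcal H^{*}$, hence is weakly compact, and your subsequence argument showing both iterated limits collapse to $\langle\zeta,\eta\rangle$ is sound), and then lift to $l$ through the slice identity $\langle l^{***}(F,G),\phi\rangle=(\phi\circ l)^{***}(F,G)$ and its reversed analogue; that identity is indeed a correct, purely formal bookkeeping exercise, and since elements of $\mathcal H^{**}$ are separated by $\mathcal H^{*}$ it yields $l^{***}=l^{r***r}$. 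Both arguments ultimately rest only on the reflexivity of the second leg and place no hypothesis on $B(\mathcal H)$. The paper's computation is more self-contained and exhibits the common value of the two Arens extensions explicitly; your route is more modular --- it quotes standard criteria (\"Ulger/Arikan, both already cited in the paper) together with the general principle that a vector-valued bilinear map is Arens regular iff all its scalar slices are, and it generalizes verbatim to any bilinear map whose slices factor through a reflexive space.
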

\begin{proof}
	Let $V\in B(\mathcal H)^{**}$ and $F\in \mathcal
	H^{**}$. Since Hilbert spaces are reflexive, $F=J_{\xi}$ for
	some $\xi \in \mathcal H$. We need to show that
	$l^{***}(V,F)=l^{r***r}(V,F)$. Let $f\in \mathcal
	H^*$. Then, by Reisz representation Theorem, $f=~\langle
	~\cdot~, \eta\rangle $ for a unique $\eta \in \mathcal
	H$. Further, we have
	\begin{equation}\label{E1}
	l^{***}(V,F)(f)=\langle V,l^{**}(F,f)\rangle.
	\end{equation} 
	Note that $l^{**}(F,f)(T)=~\la F,l^*(f,T) \ra $ and
	$l^*(f,T)(\zeta)=f(T\zeta) = \la l(T,\zeta), \eta \ra $ for
	all $T \in \mathcal{H}^{**}$ and $\zeta \in \mathcal{H}$; so,
	\begin{eqnarray}\label{E2}
	l^{**}(F,f)(T) &=& \langle F,\la l(T,~\cdot~),\eta \ra \rangle
        \nonumber\\ &=& J_\xi(\la l(T,~\cdot~),\eta\ra )
        \nonumber\\ &= & \la l(T,\xi),\eta\ra \nonumber .
	\end{eqnarray}
	for all $T \in \mathcal{H}^{**}$; so, $  l^{**}(F,f) =  \la
	l(~\cdot~,\xi),\eta\ra$. And,  similarly, we obtain
	\begin{equation}\label{E3}
	l^{r***r}(V,F)(f)=l^{r***}(F,V)(f)=\langle
        F,l^{r**}(V,f)\rangle=l^{r**}(V,f)(\xi)
	\end{equation}
	and $ l^{r**}(V,f)(\xi)= \la V,l^{r*}(f,\xi)\ra $.
	Also, note that 
	\[
	l^{r*}(f,g)(S)  = f(l(S,\xi))\\  = \langle l(S,\xi),\eta \rangle
	\]
	for all $S \in B(\mathcal H)$; so,
        \begin{equation}\label{E4} 
	l^{r**}(V,f)(\xi) =\langle V,\langle l(~\cdot~,\xi ),\eta
        \rangle \rangle.
	\end{equation}
	Thus, from Equations (\ref{E1}),(\ref{E2}),(\ref{E3}) and
        (\ref{E4}), we conclude
        that $$l^{***}(V,F)(f)=l^{r***r}(V,F)(f)=\langle V,\la
        l(~\cdot~,\xi),\eta \rangle \rangle$$ for all $f\in \mathcal
        H^*$. Thus, $l^{***}(V,F)=l^{r***r}(V,F)$. We conclude that
        $l^{***}=l^{r***r}$ and hence $l$ is Arens regular. Arens regularity of $m$ can be proved in similar fashion.
\end{proof}

       \section{Arens regularity of projective tensor product of Schatten spaces}
   
   We will be particularly interested in $S_2(\mathcal H)$, the space of
   Hilbert-Schmidt operators on $\mathcal{H}$. It is known to be a
   Hilbert space with respect to the inner product $\langle A,B\rangle :=
   \operatorname{Tr}(B^*A)$, and is also denoted by $\mathcal K$.  Being
   reflexive, $\mathcal{K}$ is an Arens regular Banach algebra.

   \"Ulger had proved in \cite[Corr. 4.7]{Ulger} that
   $\ell^2\otimes^\gamma\ell^2$ is Arens regular (where $\ell^2$ is
   equipped with the pointwise product).  One would guess that the
   same should hold for $\mathcal K \otimes^\gamma \mathcal K$ as
   well. However, we have the following: 
   
   \begin{theorem}\label{ktimes k}
   	Let $\mathcal H_1$ and $\mathcal H_2$ be two infinite
        dimensional Hilbert spaces. Let $\mathcal K_1:=S_2(\mathcal
        H_1)$ and $\mathcal K_2:=S_2(\mathcal H_2)$. Then, the Banach
        algebra $\mathcal{K}_1\otimes^\gamma \mathcal{K}_2$ is not
        Arens regular.
   \end{theorem}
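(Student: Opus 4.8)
The plan is to invoke Ülger's criterion (\Cref{ulger}): since $\mathcal K_1 \obp \mathcal K_2$ is Arens regular if and only if every bilinear form $\mathcal K_1 \times \mathcal K_2 \to \mathbb C$ is biregular, it suffices to exhibit a single bounded bilinear form together with sequences in the unit balls for which the two iterated limits in \Cref{double-limits} both exist but disagree. Because $\mathcal H_1$ and $\mathcal H_2$ are infinite dimensional, I may fix orthonormal sequences $\{e_i\}_{i\ge 1}\subset\mathcal H_1$ and $\{f_i\}_{i\ge 1}\subset\mathcal H_2$; denote by $E_{ij}\in\mathcal K_1$ the rank-one operator $\zeta\mapsto\langle\zeta,e_j\rangle e_i$ and by $F_{ij}\in\mathcal K_2$ the operator $\zeta\mapsto\langle\zeta,f_j\rangle f_i$. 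Each has Hilbert--Schmidt norm $1$, and one has the matrix-unit relations $E_{i1}E_{1j}=E_{ij}$ and $F_{i1}F_{1j}=F_{ij}$.

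The key device is an \emph{asymmetric} bilinear form. I would set $m(S,T)=\sum_{p<q}\langle Se_q,e_p\rangle\,\langle Tf_q,f_p\rangle$ for $S\in\mathcal K_1$ and $T\in\mathcal K_2$. The one genuine point to check is that $m$ is well defined and bounded: since $\{e_i\}$ and $\{f_i\}$ are orthonormal, $\sum_{p<q}|\langle Se_q,e_p\rangle|^2\le\|S\|_2^2$ and likewise for $T$, so Cauchy--Schwarz gives $|m(S,T)|\le\|S\|_2\|T\|_2$. Thus $m$ is a bounded bilinear form; it is precisely the restriction of the entrywise pairing to the strictly upper-triangular entries, and this cutoff is what will break the symmetry between the two orders of limits.

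Next I would feed in the sequences $a_i=E_{i1}$, $\tilde a_j=E_{1j}$ in $\mathcal K_1$ and $b_i=F_{i1}$, $\tilde b_j=F_{1j}$ in $\mathcal K_2$, all lying in the respective unit balls. The matrix-unit relations give $a_i\tilde a_j=E_{ij}$ and $b_i\tilde b_j=F_{ij}$, whose only nonzero matrix entry is the $(i,j)$ one. Hence $m(a_i\tilde a_j,\,b_i\tilde b_j)$ collapses to a single term, equal to $1$ when $i<j$ and to $0$ when $i\ge j$. Consequently $\lim_i\lim_j m(a_i\tilde a_j,b_i\tilde b_j)=1$ while $\lim_j\lim_i m(a_i\tilde a_j,b_i\tilde b_j)=0$; both iterated limits exist and differ, so $m$ is not biregular, and \Cref{ulger} then yields that $\mathcal K_1\obp\mathcal K_2$ is not Arens regular.

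The conceptual obstacle worth isolating is why such a form is needed. For the natural candidates---the trace form $m(S,T)=\operatorname{Tr}(ST)$ or the full entrywise pairing---the products $a_i\tilde a_j$, $b_i\tilde b_j$ that genuinely entangle the indices $i$ and $j$ (for instance normalized truncated projections, producing factors of order $\min(i,j)/(ij)=1/\max(i,j)$) have Hilbert--Schmidt norm tending to $0$ off the diagonal $i\approx j$, which forces both iterated limits to vanish; this is exactly why one might naively expect regularity, as for $\ell^2\obp\ell^2$. The point of the construction above is that the asymmetry is carried entirely by the bilinear form, while the products are norm-preserving rank-one matrix units, so nothing shrinks and the upper-triangular cutoff in $m$ produces genuinely different iterated limits. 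Verifying the boundedness of $m$ is the only step requiring care; the remaining computations are immediate.
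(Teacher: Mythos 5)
Your proof is correct and is essentially the same as the paper's: the paper also tests a triangularly truncated entrywise pairing (packaged there as $m(S,T)=\langle T,\varphi(S)\rangle$ with $\varphi$ sending the matrix unit $e_i\otimes e_j$ to $f_i\otimes f_j$ only when $j\le i$) against the matrix-unit sequences $S_i=\theta_{e_i,e_1}$, $\tilde S_j=\theta_{e_1,e_j}$, obtaining iterated limits $0$ and $1$. The only differences are cosmetic: you use the strict upper rather than the closed lower triangle (which swaps which iterated limit equals $1$), and you verify boundedness directly by Cauchy--Schwarz instead of exhibiting the conjugate-linear contraction $\varphi$.
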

   \begin{proof}
   	Without the loss of generality, we can assume that $\mathcal
        H_1$ and $\mathcal H_2$ are separable infinite dimensional
        Hilbert spaces.  Fix any two orthonormal bases $\{e_i\}_{i\in
          \mathbb N}$ and $\{f_j\}_{j\in \mathbb N}$ for $\mathcal
        H_1$ and $\mathcal H_2$, respectively. Consider the pair of
        sequences $\{S_i\}$ and $\{\tilde S_j\}$ in $\mathcal K_1$
        given by $S_i=e_i\otimes e_1$ and $\tilde S_j=e_1\otimes e_j$.
        Note that $S_i\tilde S_j=e_i\otimes e_j$ for all $i, j \in
        \mathbb N$.  Recall that the association
   	\[
   	\mathcal H {\otimes} \mathcal H \ni \xi \otimes \eta \mapsto
   	\theta_{\xi, \eta} \in S_2(\mathcal{H})
   	\]
   	extends to a unitary from $\mathcal H \bar{\otimes} \mathcal
        H$ onto $S_2(\mathcal H)$ for any Hilbert space
        $\mathcal{H}$. We will take the liberty to use this
        identification without any further mention. In particular, the
        set $\{S_i\tilde S_j\}_{i,j\in \mathbb N}$ thus forms an
        orthonormal basis for $\mathcal K_1$.
   	
   	   	Now, let $F_{i,j}:=f_i\otimes f_j$ whenever $j\leq i$ and $0$
   	otherwise.  Define $\varphi:\mathcal K_1\to\mathcal K_2$ by
   	\[
   	\varphi\big(\sum c_{i,j}S_i\tilde S_j\big)=\sum \bar{c}_{i,j}F_{i,j},
   	\ \sum_{i,j}|c_{i,j}|^2 < \infty.
   	\]
   	Clearly, $\varphi$ is a conjugate linear (contractive) continuous
   	map. Thus, the map $m: \mathcal{K}_1\times \mathcal{K}_2 \rightarrow \mathbb C$
   	given by $m(S, T) = \langle T, \varphi(S)\rangle $ is a bounded
   	bilinear form. We assert that it is not biregular.
   	
   	Towards this end, let $T_i:=f_i\otimes f_1$ and $\tilde T_j:=f_1\otimes f_j$ for all $i, j
   	\in \mathbb N$. Note that both the pairs of sequences $\{T_i\},\{\tilde
   	T_j\}$ and $\{S_i\},\{\tilde S_j\}$ are bounded sequences in the unit
   	balls of $\mathcal K_2$ and $\mathcal K_1$, respectively. Observe
   	that
   	\begin{equation*}
   	\begin{split}
   	m(S_i\tilde S_j,T_i\tilde T_j)&= \left<T_i\tilde
   	T_j,\varphi(S_i\tilde
   	S_j)\right>\\&= \operatorname{Tr}\left(\varphi(S_i\tilde
   	S_j)^*T_i\tilde T_j\right)\\&=\sum_{r\in \mathbb N}\left<T_i\tilde
   	T_j (f_r),\varphi(S_i\tilde S_j)(f_r)\right>
   	\end{split}
   	\end{equation*}
   	for all $i, j \in \mathbb N$. Thus,
   	\begin{equation*}
   	\begin{split}
   	\lim_i\lim_j m(S_i\tilde S_j,T_i\tilde T_j)&=\lim_i\lim_j
   	\sum_{r\in \mathbb N}\left<T_i\tilde T_j (f_r),\varphi(S_i\tilde
   	S_j)(f_r)\right>\\&=\lim_i\lim_j
   	\left<f_i,F_{i,j}(f_j)\right>\\&= 0; \text{ and }
   	\end{split}
   	\end{equation*}
   	\begin{equation*}
   	\begin{split}
   	\lim_j\lim_i m(S_i\tilde S_j,T_i\tilde
   	T_j)&=\lim_j\lim_i \sum_{r\in I}\left<T_i\tilde T_j
   	(f_r),\varphi(S_i\tilde
   	S_j)(f_r)\right>\\&=\lim_j\lim_i
   	\left<f_i,F_{i,j}(f_j)\right>\\&= 1.
   	\end{split}
   	\end{equation*}
   	Thus, the bilinear form $m$ is not biregular and hence, by
        \Cref{ulger}, $\mathcal K_1\otimes^\gamma \mathcal K_2$ is not
        Arens regular.\smallskip
   \end{proof}
   
   \begin{corollary}
   	For $1\leq p,q\leq 2$, the Banach algebra $S_p(\mathcal
   	H_1)\otimes^\gamma S_q(\mathcal H_2)$ is not Arens regular.
   \end{corollary}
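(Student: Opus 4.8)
The plan is to reduce the statement to \Cref{ktimes k}, by observing that the very bilinear form and the very sequences constructed in its proof already witness non-biregularity at the level of the smaller Schatten spaces; as there, I keep the standing assumption that $\mathcal H_1$ and $\mathcal H_2$ are infinite dimensional. First I would record the inclusion $S_p(\mathcal H)\subset S_2(\mathcal H)$ for $1\leq p\leq 2$: since $\|\cdot\|_2\leq \|\cdot\|_p$, this inclusion is norm-decreasing, so the closed unit ball of $S_p(\mathcal H_1)$ sits inside that of $\mathcal K_1$, and likewise the closed unit ball of $S_q(\mathcal H_2)$ sits inside that of $\mathcal K_2$.

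Next I would transport the bilinear form $m(S,T)=\langle T,\varphi(S)\rangle$ from the proof of \Cref{ktimes k}. Its restriction $m\colon S_p(\mathcal H_1)\times S_q(\mathcal H_2)\to \mathbb C$ is again a bounded bilinear form, because $|m(S,T)|\leq \|S\|_2\,\|T\|_2\leq \|S\|_p\,\|T\|_q$. The crucial point is that the test sequences $S_i=e_i\otimes e_1$, $\tilde S_j=e_1\otimes e_j$ and $T_i=f_i\otimes f_1$, $\tilde T_j=f_1\otimes f_j$ are all rank-one operators; a rank-one operator $\theta_{\xi,\eta}$ has a single nonzero singular value $\|\xi\|\,\|\eta\|$, so $\|\theta_{\xi,\eta}\|_p=\|\xi\|\,\|\eta\|$ for every $p$. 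Consequently each $S_i,\tilde S_j$ lies in the closed unit ball of $S_p(\mathcal H_1)$ and each $T_i,\tilde T_j$ lies in the closed unit ball of $S_q(\mathcal H_2)$, for all $1\leq p,q\leq 2$.

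Since the multiplication on $S_p(\mathcal H_1)$ is just operator composition, the products $S_i\tilde S_j=e_i\otimes e_j$ and $T_i\tilde T_j$ are literally the same operators as in \Cref{ktimes k}, and hence the values $m(S_i\tilde S_j,T_i\tilde T_j)$ are unchanged. Therefore the two iterated limits computed there remain $0$ and $1$, so the restricted form $m$ fails to be biregular on $S_p(\mathcal H_1)\times S_q(\mathcal H_2)$. Invoking \Cref{ulger} then yields that $S_p(\mathcal H_1)\otimes^\gamma S_q(\mathcal H_2)$ is not Arens regular.

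The only point requiring genuine care — rather than the mere invocation of \Cref{ktimes k} — is the verification that passing to the smaller spaces preserves both the boundedness of $m$ and the membership of the test sequences in the unit balls. Both reduce to the single fact that the Schatten norms are monotone decreasing in $p$ and coincide on rank-one operators, so I expect no real obstacle here; the entire argument is a transfer of the already-established construction through these norm inequalities.
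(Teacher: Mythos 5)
Your proposal is correct and follows essentially the same route as the paper: restrict the bilinear form $m(S,T)=\langle T,\varphi(S)\rangle$ from \Cref{ktimes k} to $S_p(\mathcal H_1)\times S_q(\mathcal H_2)$, use $\|\cdot\|_2\le\|\cdot\|_p$ to keep it bounded, and reuse the same test sequences to get the mismatched iterated limits $0$ and $1$. Your explicit check that the rank-one test operators have the same norm in every Schatten class (hence stay in the unit balls of $S_p$ and $S_q$) is a detail the paper leaves implicit, and it is a worthwhile addition.
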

   \begin{proof}
   	Since $S_p(\mathcal H)\subset S_2(\mathcal H)$ for every
   	$1\leq p\leq 2$, the bilinear form $m: S_p(\mathcal H_1)\times
   	S_q(\mathcal H_2)\to \mathbb C$ defined as
   	$m(S,T)=\left<T,\varphi(S)\right>$ for some conjugate linear bounded
   	map $\varphi:\mathcal H_1\to \mathcal H_2$ is still  bounded  because
   	\[
   	|m(S,T)|\leq |\phi||\, ||T||_2||S||_2\leq ||\phi||\, ||S||_p
   	|||T||_q\ 
   	\]
   	$\text{for all}\ (S, T) \in S_p(\mathcal H_1)\times S_q(\mathcal
   	H_2).$ For the same choices of pairs of sequences and the
   	conjugate linear map $\varphi$ as in previous theorem, $m$ is
   	not biregular.
   \end{proof}

We can thus deduce  the following well known result:
\begin{corollary}   
Let $\mathcal H_1$ and $\mathcal H_2$ be Hilbert spaces. Then, the
Banach algebra $S_p(\mathcal H_1)\otimes^\gamma S_q(\mathcal H_2)$ is
not reflexive for every pair $ 1\leq p, q \leq 2$.
\end{corollary}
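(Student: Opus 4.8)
The plan is to deduce non-reflexivity of $S_p(\mathcal H_1)\otimes^\gamma S_q(\mathcal H_2)$ directly from the preceding results, using the standard principle that \emph{a reflexive Banach algebra is automatically Arens regular}. This implication is already invoked in the Introduction of the paper (``operator algebras and reflexive Banach algebras are all Arens regular''), so I would take it as known. The contrapositive then says that any Banach algebra which fails to be Arens regular cannot be reflexive.

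First I would recall that the immediately preceding Corollary establishes that, for every pair $1\leq p,q\leq 2$, the projective tensor product $S_p(\mathcal H_1)\otimes^\gamma S_q(\mathcal H_2)$ is \emph{not} Arens regular (this was obtained by exhibiting a non-biregular bilinear form $m(S,T)=\langle T,\varphi(S)\rangle$ via \Cref{ulger}). The entire content of the present Corollary is then simply the contrapositive of the reflexivity-implies-Arens-regularity principle applied to this algebra.

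So the proof would read essentially in one line: since $S_p(\mathcal H_1)\otimes^\gamma S_q(\mathcal H_2)$ is not Arens regular by the previous corollary, and since every reflexive Banach algebra is Arens regular, it follows that $S_p(\mathcal H_1)\otimes^\gamma S_q(\mathcal H_2)$ cannot be reflexive. I would phrase it as follows.

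\begin{proof}
Recall that every reflexive Banach algebra is Arens regular (see \cite{Palacios} and \cite[1.4.2]{Palmer}). By the previous corollary, the Banach algebra $S_p(\mathcal H_1)\otimes^\gamma S_q(\mathcal H_2)$ is not Arens regular for any pair $1\leq p,q\leq 2$. Hence it cannot be reflexive.
\end{proof}

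The statement involves no real obstacle, since all the work has already been done in the non-regularity results above; the only point worth flagging is that one must invoke the correct direction of the classical implication (reflexivity $\Rightarrow$ Arens regularity), whose contrapositive gives exactly the desired conclusion. One could alternatively prove non-reflexivity by hand --- noting that the image of the pair of bounded sequences $\{S_i\tilde S_j\}$, $\{T_i\tilde T_j\}$ under $m$ has unequal iterated limits, which already obstructs weak compactness of the relevant bilinear map and hence reflexivity of the tensor product --- but this merely reconstructs the proof of \Cref{ulger} and is unnecessary once Arens regularity has failed. The clean one-line deduction via the contrapositive is the approach I would use.
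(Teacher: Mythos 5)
Your proposal is correct and is exactly the argument the paper intends: the corollary is introduced with ``We can thus deduce the following well known result'' and carries no written proof, the implicit one being precisely the contrapositive of ``reflexive $\Rightarrow$ Arens regular'' applied to the preceding non-regularity corollary. Nothing is missing from your one-line deduction.
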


   The following must be a folklore. We include a proof because of
   lack of knowledge of a proper reference.
   \begin{prop}\label{finite}
Let $A$ be an Arens regular Banach algebra and $B$ be a finite
dimensional Banach algebra. Then, $A \obp B$ is Arens regular.
     \end{prop}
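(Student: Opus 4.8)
The plan is to use \"Ulger's biregularity criterion (\Cref{ulger}) and reduce everything to showing that an arbitrary bilinear form $m : A \times B \to \mathbb{C}$ is biregular. The key structural fact I would exploit is that $B$ is finite dimensional, so its closed unit ball is norm-compact; hence any bounded sequence in $B$ has a norm-convergent subsequence. This is exactly the feature that made \Cref{finite-biregular} work, and I expect the same mechanism to carry through here. First I would fix two pairs of sequences $(a_i), (\tilde a_j)$ in the unit ball of $A$ and $(b_i), (\tilde b_j)$ in the unit ball of $B$, and assume the two iterated limits in \Cref{double-limits} exist; the goal is to show they agree.

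Next I would pass to subsequences. By norm-compactness of the unit ball of $B$, after passing to a subsequence I may assume $b_i \to b$ and $\tilde b_j \to \tilde b$ in norm, so that by joint continuity of the multiplication in $B$ we get $b_i \tilde b_j \to b \tilde b$ in norm (uniformly enough to commute the limits). The essential simplification is that $m$, being bilinear and with $B$ finite dimensional, can be written out in a basis $\{u_1,\dots,u_d\}$ of $B$: writing $b_i \tilde b_j = \sum_{s=1}^d \lambda^{(i,j)}_s u_s$, bilinearity gives $m(a_i \tilde a_j, b_i \tilde b_j) = \sum_{s=1}^d \lambda^{(i,j)}_s\, m_s(a_i \tilde a_j)$, where $m_s(\cdot) := m(\cdot, u_s)$ is a bounded linear functional on $A$. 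Since the sum is finite, limits pass through it freely, so the entire problem decouples into finitely many scalar factors times the behavior of the linear functionals $m_s$ on the products $a_i \tilde a_j$.

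I would then invoke the Arens regularity of $A$ in the linear-functional form. For each fixed $s$, $m_s \in A^*$, and the double-limit condition $\lim_i \lim_j m_s(a_i \tilde a_j) = \lim_j \lim_i m_s(a_i \tilde a_j)$ (whenever both exist) is precisely the Grothendieck--\"Ulger iterated-limit characterization of Arens regularity of the bilinear multiplication map $A \times A \to A$ composed with $m_s$; equivalently it follows directly from the definition of Arens regularity of $A$ applied to the element $m_s$ of $A^*$, since $A$ Arens regular means $\Box$ and $\diamond$ coincide on $A^{**}$, which is known to be equivalent to the iterated limits of $\langle m_s, a_i \tilde a_j\rangle$ agreeing. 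Combining this with the already-established convergence $\lambda^{(i,j)}_s \to \lambda_s$ (the coordinates of $b\tilde b$) and the finiteness of the index $s$, both iterated limits of $m(a_i \tilde a_j, b_i \tilde b_j)$ equal $\sum_s \lambda_s \cdot (\text{common value of the }m_s\text{ iterated limits})$, hence they coincide.

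The main obstacle I anticipate is the bookkeeping around passing to subsequences while preserving the existence and values of the original iterated limits: one must argue that it suffices to prove equality of the iterated limits along a subsequence, since the original iterated limits are assumed to exist and therefore equal any subsequential iterated limit. A clean way to handle this is to note that if $\lim_i \lim_j m(a_i \tilde a_j, b_i \tilde b_j)$ exists, its value is unchanged upon passing to any subsequence in $i$ (and similarly in $j$), so extracting the norm-convergent subsequences in $B$ costs nothing. Once that reduction is in place, the decoupling via the finite basis and the linear Arens regularity of $A$ finishes the argument cleanly, and the \emph{In particular} conclusion that $A \obp B$ is Arens regular follows immediately from \Cref{ulger}.
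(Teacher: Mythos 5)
Your argument is correct in substance but follows a genuinely different route from the paper. The paper disposes of the proposition in two lines: since $B$ is finite dimensional, the multiplication maps $\tau_{b''}$ and ${}_{b''}\tau$ associated with each $b''\in B^{**}$ are compact, so \"Ulger's Theorem~4.5 applies and every bilinear form $m:A\times B\to\mathbb C$ is automatically biregular; \Cref{ulger} then finishes. You instead verify biregularity by hand: norm-compactness of the unit ball of $B$ lets you assume $b_i\to b$ and $\tilde b_j\to\tilde b$ in norm, expanding $b_i\tilde b_j$ in a basis $\{u_1,\dots,u_d\}$ decouples $m(a_i\tilde a_j,b_i\tilde b_j)$ into $\sum_{s}\lambda_s^{(i,j)}m_s(a_i\tilde a_j)$ with $m_s=m(\cdot,u_s)\in A^*$, and the iterated-double-limit characterization of Arens regularity of $A$ (every functional in $A^*$ being weakly almost periodic) handles each summand. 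Your version is more self-contained, makes visible exactly where each hypothesis enters, and parallels the mechanism of \Cref{finite-biregular}; the paper's citation is shorter and rests on a criterion (compactness of the module maps) that applies beyond the finite-dimensional case.

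One point needs shoring up. After passing to norm-convergent subsequences in $B$, you cannot yet ``pass limits through the finite sum freely'': the existence of $\lim_j\sum_s\lambda_s^{(i,j)}m_s(a_i\tilde a_j)$ does not imply that $\lim_j m_s(a_i\tilde a_j)$ exists for each $s$ separately, since oscillations in different summands could cancel. You must first extract further subsequences, by a standard diagonal argument applied to the finitely many bounded double arrays $\left(m_s(a_i\tilde a_j)\right)_{i,j}$, so that both iterated limits of $m_s(a_i\tilde a_j)$ exist for every $s\leq d$; this costs nothing because the iterated limits assumed to exist at the outset are unchanged along subsequences. With that insertion the decoupling, the appeal to the double-limit criterion for $A$, and hence the whole proof go through.
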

   \begin{proof}
Since $B$ is finite dimensional, for each $b''\in B^{**}$ the maps
$\tau_{b''}:B^{***}\to B^{***}$ and $_{b''}\tau$ defined as $x''\mapsto
x''b''$ and $x''\mapsto b''x''$ are compact. Hence, by
\cite[Th. 4.5]{Ulger}, each bilinear form $m:A\times B\to \mathbb C$
is biregular; so, by \Cref{ulger}, $A\obp B$ is Arens regular.
   \end{proof}

   \begin{corollary}
Let $\mathcal K_1=S_2(\mathcal H_1)$ and $\mathcal
        K_2=S_2(\mathcal H_2)$, where $\mathcal H_1$ and $\mathcal
        H_2$ are two Hilbert spaces. Then, $\mathcal K_1\otimes^\gamma
        \mathcal K_2$ is Arens regular if and only if either $\mathcal
        H_1$ or $\mathcal H_2$ is finite dimensional.
\end{corollary}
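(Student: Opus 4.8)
The plan is to establish the two implications separately, observing that each is essentially already contained in the machinery built up earlier in this section.

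For the forward implication I would argue by contraposition. Suppose neither $\mathcal H_1$ nor $\mathcal H_2$ is finite dimensional, so that both are infinite dimensional. Then \Cref{ktimes k} applies directly and tells us that $\mathcal K_1 \otimes^\gamma \mathcal K_2$ is \emph{not} Arens regular. Contrapositively, Arens regularity of $\mathcal K_1 \otimes^\gamma \mathcal K_2$ forces at least one of $\mathcal H_1$, $\mathcal H_2$ to be finite dimensional. This half requires no new computation; it is exactly the content of the theorem proved above.

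For the reverse implication, suppose that $\mathcal H_1$ or $\mathcal H_2$ is finite dimensional. Here \Cref{finite-biregular} applies verbatim: under precisely this hypothesis, every bounded bilinear form $m:\mathcal K_1 \times \mathcal K_2 \to \mathbb C$ is biregular. Invoking \"Ulger's characterization (\Cref{ulger}), biregularity of every such $m$ is equivalent to Arens regularity of the projective tensor product, so $\mathcal K_1 \otimes^\gamma \mathcal K_2$ is Arens regular. Alternatively, one could route through \Cref{finite}: if, say, $\mathcal H_2$ is finite dimensional then $\mathcal K_2 = S_2(\mathcal H_2) = B(\mathcal H_2)$ is finite dimensional, while $\mathcal K_1$, being a Hilbert space, is reflexive and hence Arens regular, so \Cref{finite} yields the conclusion; the remaining case of $\mathcal H_1$ finite dimensional then follows from the isometric algebra isomorphism $\mathcal K_1 \otimes^\gamma \mathcal K_2 \cong \mathcal K_2 \otimes^\gamma \mathcal K_1$.

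There is no genuine obstacle in this corollary, which is why the main work was front-loaded into \Cref{ktimes k} and \Cref{finite-biregular}; the corollary simply records that these two results dovetail into a clean dichotomy. The only point meriting a word of care is the identification, in the \Cref{finite} route, of $S_2(\mathcal H_i)$ with a finite dimensional algebra when $\mathcal H_i$ is finite dimensional, together with the symmetry of $\otimes^\gamma$ used to cover both cases; choosing instead the \Cref{finite-biregular} route sidesteps even this, since its hypothesis is already symmetric in $\mathcal H_1$ and $\mathcal H_2$.
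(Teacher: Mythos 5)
Your proof is correct and follows exactly the paper's own route: necessity is \Cref{ktimes k} by contraposition, and sufficiency is \Cref{finite-biregular} combined with \Cref{ulger} (with \Cref{finite} as the alternative you also note). The paper additionally records a third proof of sufficiency via compactness of all maps $\mathcal K_1\to\mathcal K_2$ and reflexivity of the projective tensor product, but this is only an extra alternative, not a different argument in substance.
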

   \begin{proof}
Necessity follows from \Cref{ktimes k}. We saw a proof of sufficiency
in \Cref{finite-biregular} and it also follows from
\Cref{finite}. Alternately, if $\mathcal H_2$ is finite dimensional
then every map $\mathcal K_1\to \mathcal K_2$ is compact. This implies
that $\mathcal K_1\otimes^\gamma \mathcal K_2$ is reflexive (by
\cite[Th. 4.21]{Ryan}) and hence Arens regular.
     \end{proof}

Being a $C^*$-algebra, $B(\mathcal{H})$ is Arens regular for any
Hilbert space $H$. One would guess that $B(\mathcal K)\obp
\mathcal K$ should also be Arens regular, but it turns out to be the
opposite.

\begin{theorem}\label{BK-obp-K}
Let $\mathcal{H}$ be an infinite dimensional Hilbert space and
$\mathcal K$ denote the Banach algebra $S_2(\mathcal H)$. Then, the
Banach algebra $B(\mathcal K)\obp \mathcal K$ is not Arens regular.
\end{theorem}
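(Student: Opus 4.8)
The plan is to invoke Ülger's criterion (\Cref{ulger}): to show $B(\mathcal K)\obp\mathcal K$ is not Arens regular it suffices to exhibit a single bounded bilinear form $m\colon B(\mathcal K)\times\mathcal K\to\mathbb C$ that fails to be biregular. I would build $m$ so as to reproduce the ``staircase'' asymmetry already exploited in \Cref{ktimes k}; the only genuinely new issue is that the first leg now carries the operator norm rather than the Hilbert--Schmidt norm, so the contraction used there is no longer available and must be replaced.

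First I would reduce to $\mathcal H$ separable (as in \Cref{ktimes k}), fix an orthonormal basis $\{f_i\}$ of $\mathcal H$ so that $\{f_i\otimes f_j\}_{i,j}$ is an orthonormal basis of $\mathcal K=S_2(\mathcal H)$, and, using that $\mathcal K$ is an infinite dimensional Hilbert space, fix (from disjoint parts of one basis of $\mathcal K$) a unit vector $\mu$, an orthonormal sequence $\{\mu_j\}_j$, and an orthonormal family $\{\nu_{ij}\}_{j\le i}$ in $\mathcal K$. In the first leg I take $\tilde a_j:=\mu_j\otimes\mu\in B(\mathcal K)$, the rank one operator $\zeta\mapsto\langle\zeta,\mu\rangle\mu_j$, and the partial isometry $a_i\in B(\mathcal K)$ determined by $a_i\mu_j=\chi_{\{j\le i\}}\nu_{ij}$ and $a_i=0$ off $\overline{\operatorname{span}}\{\mu_j\}$; both are contractions, and $a_i\tilde a_j\mu=\chi_{\{j\le i\}}\nu_{ij}$. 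In the second leg I reuse the sequences of \Cref{ktimes k}, namely $T_i:=f_i\otimes f_1$ and $\tilde T_j:=f_1\otimes f_j$, so that $T_i\tilde T_j=f_i\otimes f_j$. All four sequences lie in the respective unit balls.

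The form is $m(X,T):=\langle T,\varphi(X)\rangle_{\mathcal K}$, where $\varphi\colon B(\mathcal K)\to\mathcal K$ is the conjugate linear map $\varphi(X)=\sum_{j\le i}\overline{\langle X\mu,\nu_{ij}\rangle}\,f_i\otimes f_j$ (so that $m$ is indeed linear in $X$). The step I expect to be the main obstacle, and the crux of the whole argument, is the boundedness of $\varphi$ with respect to the operator norm on $B(\mathcal K)$: the Hilbert--Schmidt estimate used in \Cref{ktimes k} is unavailable here, but since the $\nu_{ij}$ are orthonormal, Bessel's inequality gives $\|\varphi(X)\|_{\mathcal K}^2=\sum_{j\le i}|\langle X\mu,\nu_{ij}\rangle|^2\le\|X\mu\|^2\le\|X\|^2$, so $\varphi$, and hence $m$, is a contraction. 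Finally, from the orthonormality of $\{\nu_{ij}\}$ I would compute $\varphi(a_i\tilde a_j)=\chi_{\{j\le i\}}\,f_i\otimes f_j$, whence $m(a_i\tilde a_j,T_i\tilde T_j)=\langle f_i\otimes f_j,\chi_{\{j\le i\}}f_i\otimes f_j\rangle=\chi_{\{j\le i\}}$. The two iterated limits of this quantity are then $0$ and $1$, so $m$ is not biregular, and \Cref{ulger} yields that $B(\mathcal K)\obp\mathcal K$ is not Arens regular.
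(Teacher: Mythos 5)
Your proof is correct, and it follows the same high-level strategy as the paper (invoke \Cref{ulger} and exhibit a staircase-type form whose two iterated limits are $0$ and $1$), but the concrete construction is genuinely different. The paper uses the evaluation form $m(T,A)=\langle T(A),\theta_{\xi_0,\xi_0}\rangle$ and encodes the staircase in the $B(\mathcal K)$-leg: it takes $\tilde T_j(A)=\theta_{R_j(A(e_1)),e_1}$ built from the finite-rank projections $R_j$ and $T_i(A)=\theta_{S_i(A(e_1)),e_1}$ built from the rank-one operators $S_i=\theta_{e_1,e_i}$, while the $\mathcal K$-leg sequences are essentially trivial ($A_i\tilde A_j=A_i$); the non-biregularity then comes from $\langle R_j(e_i),e_i\rangle=\chi_{\{i\leq j\}}$. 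You instead transport the counterexample of \Cref{ktimes k} through the point-evaluation $X\mapsto X\mu$: your form is $m(X,T)=\langle T,\varphi(X)\rangle$ with the staircase hidden in the conjugate-linear map $\varphi$, you reuse the second-leg sequences of \Cref{ktimes k} verbatim, and the new work is confined to (i) choosing first-leg contractions (a rank-one operator and a partial isometry) whose products sweep out an orthonormal staircase family under evaluation at $\mu$, and (ii) the Bessel estimate $\|\varphi(X)\|^2=\sum_{j\leq i}|\langle X\mu,\nu_{ij}\rangle|^2\leq\|X\|^2$, which correctly replaces the Hilbert--Schmidt bound that is unavailable for the operator norm on $B(\mathcal K)$. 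Your route is more modular --- it makes transparent that the obstruction of \Cref{ktimes k} survives precomposition with a norm-one evaluation, and the same scheme immediately covers $B_0(\mathcal K)$ since your $a_i,\tilde a_j$ are finite rank --- whereas the paper's construction is more hands-on but yields, as a by-product, the explicit form $\langle T(A),D\rangle$ that it then reuses in \Cref{B0K-obp-K} and \Cref{BK-obp-SpH}. All the individual steps you flag (contractivity of the partial isometries, the computation $\varphi(a_i\tilde a_j)=\chi_{\{j\leq i\}}\,f_i\otimes f_j$, and the resulting iterated limits $0$ and $1$) check out.
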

\begin{proof}	
	For $\xi,\eta\in \mathcal{H}$, let $\theta_{\xi,\eta}$ be the rank-one
	operator on $\mathcal{H}$ given by $\theta_{\xi,\eta}(\gamma) =
	\langle \gamma,\eta\rangle \xi$ for $\gamma\in \mathcal{H}$. Clearly,
	\[
	\theta_{\xi, \eta}\circ \theta_{\zeta, \delta} = \langle \zeta, \eta\rangle
	\theta_{\xi, \delta}.
	\]

	Fix a unit vector $\xi_0\in \mathcal{H}$ and define a bilinear form
	$m:B(\mathcal K)\times \mathcal K\to \mathbb C$ as
	$$
	m(T,A)=\langle
	T(A),\theta_{\xi_0, \xi_0}\rangle,\  T \in B(\mathcal K), A \in \mathcal{K}.
	$$
	Clearly, $m$ is bounded. We show
	that $m$ is not biregularr, which will then, by \Cref{ulger}, imply that
	$B(\mathcal K)\obp \mathcal K$ is not Arens regular.

	Let $(e_i)$ be an orthonormal sequence in $\mathcal H$ with $e_1 = \xi_0$. Set
	$S_i = \theta_{e_1, e_i} \in B(\mathcal H)$ and let $R_j$ denote 
	the orthogonal projection onto the span of
	$\{e_1,e_2,\ldots,e_j\}$. Then,
	\begin{equation}\label{A}
	\lim_i \lim_j \langle S_iR_j(e_i),e_1\rangle = \lim_i
	\langle S_i(e_i),e_1\rangle = \lim_i \langle e_1,e_1\rangle = 1.
	\end{equation}
	And, on the other hand, we have
	\begin{equation}\label{B}
	\lim_j \lim_i \langle S_iR_j(e_i),e_1\rangle = \lim_j \lim_i
	\langle (R_j(e_i),e_i\rangle \langle e_1,e_1\rangle = 0.
	\end{equation}
	
	Now, let $\tilde A_j := \theta_{e_1,e_1}$ and $A_i :=
	\theta_{e_i,e_1}$; so that $A_i \tilde A_j = A_i$ for all $i,
	j\in \mathbb N$. Clearly, $\|A_i\|, \|\tilde{A}_j\| \leq 1 $ for all
	$i, j \in \mathbb N$.
	
	Further, for each $i, j \in \mathbb N$, define $\tilde{T_j}, T_i:
	S_2(\mathcal H) \rightarrow S_2(\mathcal H) $ by
	\[
	T_i(A) = \theta_{S_i(A(e_1)),e_1},\ \tilde T_j(A) =
	\theta_{R_j(A(e_1)), e_1} \ \text{for}\ A \in S_2(\mathcal H).
	\] One can easily check that  $\|T_i\|, \|\tilde{T}_j\| \leq 1$ for all $i, j \in \mathbb N$.          Then, we have
	$$ \tilde T_j (A) (e_1) = \theta_{R_j(A(e_1)), e_1} (e_1) =
	R_j(A(e_1))\ \text{for all}\ A \in S_2(\mathcal H)).
	$$ In particular, $ T_i \tilde T_j(A) = \theta_{S_i
		R_j(A(e_1)), e_1}$ for all $A \in S_2(\mathcal H)$, which
	then yields $ T_i \tilde T_j (A_i \tilde A_j) = T_i \tilde
	T_j(\theta_{e_i,e_1}) = \theta_{S_iR_j(e_i),e_1} $ for all $i,
	j \in \mathbb N$. In particular, we have 
	\begin{equation}\label{C}
	m(T_i \tilde T_j,  A_i \tilde A_j) =          \Big\langle  T_i \tilde T_j (A_i \tilde A_j), D \Big\rangle
	= \operatorname{Tr}\Big( \theta_{S_i R_j(e_i),e_1} \Big)
	= \langle S_i R_j(e_i) , e_1\rangle\ 
	\end{equation}
	for all $i,j\in \mathbb N$.
	Thus, Equations (\ref{A}), (\ref{B})
	and (\ref{C}) tell us that $m$ is not biregular.
\end{proof}
For any Hilberts space $\mathcal{H}$, $B_0(\mathcal H)$, the space of
compact operators on $\mathcal{H}$, being a $C^*$-algebra, is Arens
regular. The preceding technique also shows that $B_0(\mathcal K)\obp \mathcal
K$ is not Arens regular.

\begin{corollary}\label{B0K-obp-K}
Let $\mathcal H$ and $\mathcal K$ be as in \Cref{BK-obp-K}. Then, the
Banach algebra $B_0(\mathcal K)\obp \mathcal K$ is not Arens regular.
\end{corollary}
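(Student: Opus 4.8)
The plan is to recycle, essentially verbatim, the bilinear form and the four sequences constructed in the proof of \Cref{BK-obp-K}, after one extra observation: the operators $T_i$ and $\tilde T_j$ appearing there in fact belong to $B_0(\mathcal K)$. Concretely, I would consider the restriction to $B_0(\mathcal K)\times \mathcal K$ of the bilinear form $m(T,A)=\langle T(A),\theta_{\xi_0,\xi_0}\rangle$, show that this restriction is still not biregular, and then invoke \Cref{ulger} to conclude that $B_0(\mathcal K)\obp \mathcal K$ is not Arens regular.

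The decisive step is therefore to verify that $T_i,\tilde T_j$ are compact, indeed finite-rank. Recalling that $S_i=\theta_{e_1,e_i}$ acts on $\mathcal H$ by $S_i(\zeta)=\langle \zeta,e_i\rangle e_1$, one computes $S_i(A(e_1))=\langle A(e_1),e_i\rangle e_1$, so that
\[
T_i(A)=\theta_{S_i(A(e_1)),e_1}=\langle A(e_1),e_i\rangle\,\theta_{e_1,e_1}\quad\text{for all }A\in\mathcal K.
\]
Thus $T_i$ is the composition of the bounded functional $A\mapsto\langle A(e_1),e_i\rangle$ on $\mathcal K$ with multiplication by the fixed element $\theta_{e_1,e_1}\in\mathcal K$; in particular $T_i$ has rank one. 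Similarly, since $R_j$ is the orthogonal projection onto $\operatorname{span}\{e_1,\dots,e_j\}$, linearity of $\theta_{\,\cdot\,,e_1}$ in its first slot gives $\tilde T_j(A)=\sum_{k=1}^{j}\langle A(e_1),e_k\rangle\,\theta_{e_k,e_1}$, whose range lies in the $j$-dimensional space $\operatorname{span}\{\theta_{e_1,e_1},\dots,\theta_{e_j,e_1}\}$, so $\tilde T_j$ has rank at most $j$. Hence $T_i,\tilde T_j\in B_0(\mathcal K)$, and since $B_0(\mathcal K)$ is an algebra, all products $T_i\tilde T_j$ remain in $B_0(\mathcal K)$ as well.

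With this in place, the two iterated limits already computed in \Cref{BK-obp-K}, which equal $1$ and $0$ through Equations (\ref{A}), (\ref{B}) and (\ref{C}), apply without change to the restricted form: the sequences $\{T_i\},\{\tilde T_j\}$ now sit in the unit ball of $B_0(\mathcal K)$ and $\{A_i\},\{\tilde A_j\}$ in the unit ball of $\mathcal K$, while the values $m(T_i\tilde T_j,A_i\tilde A_j)=\langle S_iR_j(e_i),e_1\rangle$ are unaffected. Consequently the restricted $m$ is not biregular, and \Cref{ulger} yields the claim. The only genuine obstacle is the compactness verification above; once the explicit rank-one and finite-rank formulas for $T_i$ and $\tilde T_j$ are in hand, the remainder is a direct transcription of the argument for \Cref{BK-obp-K}.
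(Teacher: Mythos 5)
Your proposal is correct and follows exactly the paper's own route: the paper's proof simply observes that the $T_i$ and $\tilde T_j$ from \Cref{BK-obp-K} are finite-rank, hence lie in $B_0(\mathcal K)$, and reuses the same sequences and bilinear form. Your explicit computations showing $T_i(A)=\langle A(e_1),e_i\rangle\,\theta_{e_1,e_1}$ and $\tilde T_j(A)=\sum_{k=1}^{j}\langle A(e_1),e_k\rangle\,\theta_{e_k,e_1}$ merely supply the verification the paper leaves implicit.
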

\begin{proof}
	Notice that the operators $T_i$ and $\tilde T_j$ constructed
	in \Cref{BK-obp-K} are finite rank operators on $\mathcal K$
	and hence compact. Thus, the same pairs of sequences
	$\{T_i\},\{\tilde T_j\}$ and $\{A_i\},\{\tilde A_j\}$ tell us
	that the bounded bilinear form $m:B_0(\mathcal K)\times
	\mathcal K\to \mathbb C$, defined by $m(T,A)=\langle T(A),
	D\rangle$ for some fixed $D \in \mathcal K$, is not
	biregular. Rest is again taken care of by \Cref{ulger}.
\end{proof}

\begin{corollary}\label{BK-obp-SpH}
Let $\mathcal H$ and $\mathcal K$ be as in \Cref{BK-obp-K}. Then, the Banach
algebras $B(\mathcal K)\obp S_p(\mathcal H)$ and $B_0(\mathcal K)\obp
S_p(\mathcal H)$, for $1\leq p \leq 2$, are not Arens regular.
\end{corollary}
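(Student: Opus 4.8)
The plan is to obtain both statements by simply restricting the bilinear form and the sequences built in the proofs of \Cref{BK-obp-K} and \Cref{B0K-obp-K} to the smaller second leg $S_p(\mathcal H)$, leaving the first leg $B(\mathcal K)$ (resp.\ $B_0(\mathcal K)$) untouched. The relevant structural fact, recalled in the Preliminaries, is that $S_p(\mathcal H)\subset S_2(\mathcal H)=\mathcal K$ for $1\leq p\leq 2$, together with the norm domination $\|\cdot\|_2\leq\|\cdot\|_p$ on $S_p(\mathcal H)$. Since nothing about the first leg changes, the whole task reduces to confirming that the data used before remain admissible once the second leg is shrunk.

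First I would check that the bilinear form descends: defining $m:B(\mathcal K)\times S_p(\mathcal H)\to\mathbb C$ by the same formula $m(T,A)=\langle T(A),\theta_{\xi_0,\xi_0}\rangle$, the expression $T(A)$ still makes sense for $A\in S_p(\mathcal H)\subset\mathcal K$, and $|m(T,A)|\leq\|T(A)\|_2\,\|\theta_{\xi_0,\xi_0}\|_2\leq\|T\|\,\|A\|_2\leq\|T\|\,\|A\|_p$, so $m$ is bounded; the identical estimate works with $B_0(\mathcal K)$ in place of $B(\mathcal K)$. Next I would verify that the sequences $\{A_i\},\{\tilde A_j\}$ from \Cref{BK-obp-K} still lie in the unit ball of $S_p(\mathcal H)$: this is exactly where being rank-one is decisive, for $A_i=\theta_{e_i,e_1}$ and $\tilde A_j=\theta_{e_1,e_1}$ each have a single nonzero singular value equal to $1$, whence $\|A_i\|_p=\|\tilde A_j\|_p=1$ for every $p$. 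The sequences $\{T_i\},\{\tilde T_j\}$ are unchanged and already sit in the unit ball of $B(\mathcal K)$; for the compact version one recalls, as in \Cref{B0K-obp-K}, that each $T_i,\tilde T_j$ is finite rank and hence lies in $B_0(\mathcal K)$. Because $A_i\tilde A_j=A_i$ and $T_i\tilde T_j(A_i)=\theta_{S_iR_j(e_i),e_1}$ are computed precisely as before, the value $m(T_i\tilde T_j,A_i\tilde A_j)=\langle S_iR_j(e_i),e_1\rangle$ is identical, so the two iterated limits are again $1$ and $0$; thus $m$ fails to be biregular and \Cref{ulger} gives the conclusion for both $B(\mathcal K)\obp S_p(\mathcal H)$ and $B_0(\mathcal K)\obp S_p(\mathcal H)$.

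The only genuinely substantive points — and the ones I would treat most carefully — are the boundedness of $m$ on the restricted domain and the membership of $\{A_i\},\{\tilde A_j\}$ in the $S_p$-unit ball, both of which rest on the inequality $\|\cdot\|_2\leq\|\cdot\|_p$ and on the coincidence of all Schatten norms on rank-one operators. Everything else is a transcription of the previous argument, since the two iterated limits were extracted purely from the action of the operators on basis vectors and from the trace, neither of which is sensitive to the particular Schatten norm carried by the second leg.
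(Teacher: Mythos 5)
Your proposal is correct and follows exactly the paper's route: restrict the bilinear form $m(T,A)=\langle T(A),\theta_{\xi_0,\xi_0}\rangle$ to the second leg $S_p(\mathcal H)\subset S_2(\mathcal H)$, use $\|\cdot\|_2\leq\|\cdot\|_p$ for boundedness, and reuse the same sequences to violate biregularity. Your explicit check that the rank-one operators $A_i,\tilde A_j$ have $\|\cdot\|_p$-norm equal to $1$ is a detail the paper leaves implicit, but it is exactly the right justification.
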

\begin{proof}
	Notice that the bilinear form $m:B(\mathcal K)\times S_p(\mathcal
	H)\to \mathbb C$ defined as $m(T,A)=\left<T(A),D\right>$ is still a
	well defined bounded bilinear form because $S_p(\mathcal H)\subset
	S_2(\mathcal H)$ (as  $||.||_2\leq ||.||_p$) for $p\leq
	2$.  Hence, for the same choice of the pairs of bounded sequences as
	in the preceding theorem, $m$ is not biregular.
\end{proof}

\begin{corollary}\label{BH-obp-SpH}
Let $\mathcal H$ and $\mathcal K$ be as in \Cref{BK-obp-K}. Then, the
Banach algebra $B(\mathcal H)\obp S_p(\mathcal H)$ is not Arens
regular for every $1\leq p\leq 2$.
\end{corollary}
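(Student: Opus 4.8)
The plan is to run the very engine behind \Cref{BK-obp-K}, but to place the relevant sequences at the level of $B(\mathcal H)$ rather than $B(\mathcal K)$. The crucial observation is that the two sequences driving the failure of regularity in \Cref{BK-obp-K}, namely $S_i=\theta_{e_1,e_i}$ and the projections $R_j$ onto $\operatorname{span}\{e_1,\dots,e_j\}$, already live in $B(\mathcal H)$. So, instead of lifting them to operators $T_i,\tilde T_j$ on $\mathcal K$, I would feed them directly into the first leg and use the second leg $S_p(\mathcal H)$ only to supply the vector $e_i$ on which the computation $\langle S_iR_j(e_i),e_1\rangle$ is performed. By \Cref{ulger}, it suffices to exhibit one bounded bilinear form $m:B(\mathcal H)\times S_p(\mathcal H)\to\mathbb C$ that fails to be biregular.

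Concretely, I would fix an orthonormal sequence $(e_i)$ in $\mathcal H$ and define
\[
m(T,A)=\langle T(A(e_1)),e_1\rangle,\qquad T\in B(\mathcal H),\ A\in S_p(\mathcal H).
\]
This is manifestly bilinear, and it is bounded since $|m(T,A)|\le\|T\|\,\|A(e_1)\|\le\|T\|\,\|A\|_{\mathrm{op}}\le\|T\|\,\|A\|_p$, using that the operator norm is dominated by every Schatten norm. For the first leg take $a_i=S_i=\theta_{e_1,e_i}$ and $\tilde a_j=R_j$, both in the unit ball of $B(\mathcal H)$; for the second leg take the rank-one operators $A_i=\theta_{e_i,e_1}$ and $\tilde A_j=\theta_{e_1,e_1}$, which have unit $S_p$-norm. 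Using the composition rule $\theta_{\xi,\eta}\circ\theta_{\zeta,\delta}=\langle\zeta,\eta\rangle\theta_{\xi,\delta}$, the relevant products collapse to $A_i\tilde A_j=\theta_{e_i,e_1}$ and $a_i\tilde a_j=S_iR_j$, and since $\theta_{e_i,e_1}(e_1)=e_i$ one obtains
\[
m(S_iR_j,\,A_i\tilde A_j)=\langle S_iR_j(e_i),e_1\rangle
\]
for all $i,j$. This is precisely the quantity computed in Equations (\ref{A}) and (\ref{B}) of \Cref{BK-obp-K}: it equals $1$ when $i\le j$ and $0$ when $i>j$, so the two iterated limits are $1$ and $0$, respectively.

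Consequently $m$ is not biregular, and \Cref{ulger} yields that $B(\mathcal H)\obp S_p(\mathcal H)$ is not Arens regular. I do not expect a serious obstacle here; the only points requiring care are the verification that $m$ is bounded on $S_p(\mathcal H)$ (which in fact holds for every $p\ge1$, hence in particular for $1\le p\le2$) and the small bit of bookkeeping ensuring that the second-leg product feeds the vector $e_i$ into the first-leg computation, so that the scalar $\langle S_iR_j(e_i),e_1\rangle$ is reproduced verbatim. Everything else is identical to the argument in \Cref{BK-obp-K}.
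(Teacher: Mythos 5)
Your proof is correct, but it takes a genuinely different route from the paper's. The paper disposes of this corollary in one line: since $\mathcal H$ is infinite dimensional, $\mathcal H$ and $\mathcal K=S_2(\mathcal H)$ have the same Hilbert space dimension, hence are unitarily equivalent; conjugation by such a unitary identifies $B(\mathcal H)$ with $B(\mathcal K)$ as Banach algebras, so $B(\mathcal H)\obp S_p(\mathcal H)\cong B(\mathcal K)\obp S_p(\mathcal H)$ and the claim follows from \Cref{BK-obp-SpH}. You instead exhibit a non-biregular form directly on $B(\mathcal H)\times S_p(\mathcal H)$, and the construction checks out: $m(T,A)=\langle T(A(e_1)),e_1\rangle$ is bounded because $\|A\|_{\mathrm{op}}\le\|A\|_p$; the products collapse as claimed ($A_i\tilde A_j=\theta_{e_i,e_1}$, so $(A_i\tilde A_j)(e_1)=e_i$, and $a_i\tilde a_j=S_iR_j$); and the iterated limits of $\langle S_iR_j(e_i),e_1\rangle$ are $1$ and $0$ exactly as in the computation inside \Cref{BK-obp-K}. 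What your argument buys: it is self-contained (no appeal to the unitary $\mathcal H\cong\mathcal K$, which silently uses that Arens regularity is invariant under Banach algebra isomorphism), and, since your form only needs $\|A\|_{\mathrm{op}}\le\|A\|_p$ and your test operators $A_i,\tilde A_j$ are rank one while $S_i,R_j$ are finite rank, it actually proves the stronger statement that $B(\mathcal H)\obp S_p(\mathcal H)$ and $B_0(\mathcal H)\obp S_p(\mathcal H)$ fail to be Arens regular for every $1\le p<\infty$, whereas the paper's reduction inherits the restriction $p\le 2$ from \Cref{BK-obp-SpH}. The paper's route is shorter; yours is more elementary and slightly more general.
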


\begin{proof}
	Note that $\mathcal H$ and $S_2(\mathcal H)$ (=:$\mathcal K$) are
	isomophic Hilbert spaces because dimension of $\mathcal H$ and
	$S_2(\mathcal H)$ are same. (If $\{e_\alpha\}_{\alpha \in I}$ is an
	orthonormal basis for $\mathcal H$ then $\{e_i\otimes e_j\}_{i,j\in
		I}$ is an orthonormal basis for $ S_2(\mathcal H)$ ). Rest follows
	from \Cref{BK-obp-SpH}.
\end{proof}

\begin{remark}
		 We have thus observed that if $\mathcal A$ is any of the
		Banach alegbras $B(\mathcal K)$ or $S_p(\mathcal H)$ for
		$p\in[1,\infty)$, and $\mathcal B$ is any of the Banach algebras
		$S_1(\mathcal H)$ or $S_2(\mathcal H)$, then $\mathcal
		A\otimes^\gamma \mathcal B$ is not Arens regular.
	\end{remark}

\begin{remark}
  Taking $\ell^p$ with pointwise multiplication, it was shown in
		\cite[Corrollary 4.7]{Ulger2} that $\ell^p\otimes^\gamma \mathcal A$
		is Arens regular if and only if $\mathcal A$ is Arens regular. Even
		though $S_p(\mathcal{H})$ is Arens regular, we have thus observed
		that a similar characterization does not hold for $S_p(\mathcal
		H)\otimes^\gamma \mathcal A$.
                \end{remark}

We concluded this section with some immediate consequences.
\begin{remark}
  \begin{enumerate}
\item Let $\mathcal{H}$ be an infinite dimensional Hilbert
  space. Although $\mathcal K$ is an operator algebra (see
  \cite{Blecher}), the Banach algebra $\mathcal K\otimes^\gamma
  \mathcal K$ is not an operator algebra (by \Cref{ktimes k}), since operator algebras being a subalgebra of $B(H)$ must be Arens regular (subalgebra of a Arens regular Banach alegbra is Arens regular).
\item	 The bilinear form $m$ defined in Theorem \ref{BK-obp-K}
         serves as an example of a form which is Arens regular (see
         Theorem \ref{T1}) but not biregular (as proved in
         \Cref{BK-obp-K}), although one of the algebra is a unital
         $C^*$-algebra.
\end{enumerate}
  \end{remark}

\section{Arens regularity of tensor product of Hilbert-Schmit spaces with Schur product}
   When $\mathcal H$ is infinite dimensional and separable, then there
   is another important multiplication on $S_p(\mathcal H)$ which is
   given by pointwise multiplication of matrices of operators with
   respect to a fixed orthonormal basis, say, $\{ e_n: n \in \N\}$ and
   is known as the Schur product (or the Hadamard product). Schur
   product of two operators $T$ and $S$ is denoted by $T \star S$. It
   is known that $S_p(\mathcal H)$ forms a (commutative) Banach
   algebra with respect to Schur product as well. We will discuss
   only $S_2(\mathcal H)$ in this section.

   For each $T \in S_2(\mathcal H)$, $T_{rs}:=\langle T(e_s),
   e_r\rangle$ for all $r, s \in \N$; so that $\|T\|_2^2 = \sum_{r, s
     = 1}^\infty |T_{rs}|^2$.

   \begin{lemma}\label{star-lemma}
Let $\{V^{(i)}\}$ and $\{W^{(i)}\}$ be two bounded sequences
converging weakly to $V$ and $W$, respectively, in $S_2(\mathcal
H)$. Then, $V^{(i)}\star U \longrightarrow V\star U$ in norm in
$S_2(\mathcal H)$ for every $U\in S_2(\mathcal H)$.
   \end{lemma}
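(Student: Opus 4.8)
The plan is to pass from weak convergence in the Hilbert space $S_2(\mathcal H)$ to entrywise convergence of the matrix coefficients, and then to estimate the Hilbert--Schmidt norm of $V^{(i)}\star U - V\star U$ directly through its matrix entries, splitting the resulting series into a finite ``head'' and a ``tail''. (Note that the sequence $\{W^{(i)}\}$ plays no role in the conclusion, which only concerns $V^{(i)}\star U$; I would prove exactly the stated convergence for $V^{(i)}$.)

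First I would record two consequences of the hypotheses. Since $S_2(\mathcal H)$ is a Hilbert space, weak convergence $V^{(i)}\overset{w}{\to}V$ means $\langle V^{(i)}, A\rangle \to \langle V, A\rangle$ for every $A$; testing against the matrix units $A=\theta_{e_r,e_s}$ then yields the entrywise convergence $V^{(i)}_{rs}\to V_{rs}$ for each fixed pair $(r,s)$. Second, boundedness supplies an $M$ with $\|V^{(i)}\|_2\le M$ for all $i$, whence $\|V\|_2\le M$ by weak lower semicontinuity of the norm; combined with the elementary estimate $|T_{rs}|\le \|T\|_{\mathrm{op}}\le\|T\|_2$ (the Hilbert--Schmidt norm dominating the operator norm), this gives the uniform bound $|V^{(i)}_{rs}-V_{rs}|\le 2M$ for all $i,r,s$.

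The heart of the argument is the identity
\[
\|V^{(i)}\star U - V\star U\|_2^2 = \sum_{r,s}|V^{(i)}_{rs}-V_{rs}|^2\,|U_{rs}|^2,
\]
which follows at once from $(T\star U)_{rs}=T_{rs}U_{rs}$ and the formula $\|T\|_2^2=\sum_{r,s}|T_{rs}|^2$. Given $\varepsilon>0$, since $\sum_{r,s}|U_{rs}|^2=\|U\|_2^2<\infty$, I would choose a finite set $F\subset\N\times\N$ with $\sum_{(r,s)\notin F}|U_{rs}|^2<\varepsilon$ and split the series accordingly. The tail is controlled uniformly in $i$ by the bound $|V^{(i)}_{rs}-V_{rs}|\le 2M$, contributing at most $4M^2\varepsilon$; the head is a finite sum that tends to $0$ as $i\to\infty$ by the entrywise convergence established above. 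Letting $i\to\infty$ and then $\varepsilon\to 0$ finishes the proof.

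I expect the only genuine subtlety to be the interchange of the limit in $i$ with the infinite summation over $(r,s)$: this is precisely what the head/tail decomposition handles, and it is here that both hypotheses are really used --- boundedness to dominate the tail uniformly, and the square-summability of $U$ (which makes Schur multiplication by $U$ act like a compact multiplier) to make that tail small. Norm convergence of $V^{(i)}$ itself is false in general, so the presence of the factor $U$ is essential to the conclusion.
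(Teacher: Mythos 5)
Your proof is correct and follows essentially the same route as the paper's: weak convergence gives entrywise convergence of the matrix coefficients (by testing against matrix units), and the Hilbert--Schmidt norm of $V^{(i)}\star U - V\star U$ is then controlled by splitting the double series into a finite head, which vanishes in the limit, and a tail made small using the square-summability of $U$ together with the uniform bound on $\|V^{(i)}\|_2$. The only differences are cosmetic --- you place the uniform bound $|V^{(i)}_{rs}-V_{rs}|\le 2M$ on the $V$-differences and the small $\ell^2$ tail on $U$, whereas the paper does the reverse --- and your partition of $\mathbb N\times\mathbb N$ into a finite set $F$ and its complement is in fact tidier than the paper's split into $\{r\le m,\ s\le n\}$ and $\{r>m,\ s>n\}$, which as written omits the two L-shaped regions $\{r\le m,\ s>n\}$ and $\{r>m,\ s\le n\}$.
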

   \begin{proof}
Since $V^{(i)}$ converges weakly to $V$, we have
\begin{equation}\label{limit}
V^{i}_{rs} = \langle V^{i} (e_s), e_r \rangle \to \langle V (e_s), e_r \rangle = V_{rs}
\end{equation}
for all $r, s \in \N$.

Now, let $U\in S_2(\mathcal H)$. Then, for any pair $m, n \in \N$, we have
\begin{equation*}
\begin{split}
\lim \sup_i ||V^{(i)}\star U-V\star U||_2^2 & = \lim \sup_i
\sum_{r,s}|V^{(i)}_{rs} U_{rs}-V_{rs} U_{rs}|^2 \\
& = \lim \sup_i
\sum_{r=1,s=1}^{m,n}|V^{(i)}_{rs} U_{rs}-V_{rs} U_{rs}|^2  \\ &  \qquad \qquad +
\lim \sup_i
\sum_{r>m,s>n}|V^{(i)}_{rs} U_{rs}-V_{rs} U_{rs}|^2\\
& =  \lim \sup_i \sum_{r>m,s>n}|V^{(i)}_{rs} U_{rs}-V_{rs} U_{rs}|^2 \quad \text{(by \Cref{limit})}\\
& \leq
\left(\sup_{r>m,s>n}|U_{rs}|^2\right)
\left(\sum_{r>m,s>n}|V^{(i)}_{rs}-V_{rs}|^2\right).
\end{split}
\end{equation*}
Note that $\sum_{r,s=1}^\infty |U_{rs}|^2 < \infty$. So, for every $\epsilon >
0$, there exist $m, n \in \N$ such that $\sup_{r>m,s>n}|U_{rs}^2| <
\epsilon$. And, since
\[
\left(\sum_{r>m,s>n}|V^{(i)}_{rs}-V_{rs}|^2\right) \leq
2\left( \sup_i\|V^{(i)}\|_2^2 + \|V\|_2^2\right) < \infty,
\]
we conclude that \( \lim \sup_i ||V^{(i)}\star U-V\star U||_2^2=0
\). Thus, $V^{(i)}\star U\longrightarrow V\star U$ in norm.\end{proof}

   \begin{theorem}
$S_2(\mathcal H)\otimes^\gamma S_2(\mathcal H)$ is Arens
regular if $S_2(\mathcal H)$ is equipped with Schur product.
	\end{theorem}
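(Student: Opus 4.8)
The plan is to invoke \"Ulger's criterion (\Cref{ulger}): since $S_2(\mathcal H)$ with the Schur product is a commutative Banach algebra, it suffices to show that every bounded bilinear form $m:S_2(\mathcal H)\times S_2(\mathcal H)\to\mathbb C$ is biregular. Writing $\mathcal K:=S_2(\mathcal H)$ and using the Riesz representation theorem exactly as in \Cref{finite-biregular}, I would first record that there is a bounded conjugate-linear map $\varphi:\mathcal K\to\mathcal K$ with $m(S,T)=\la T,\varphi(S)\ra$ for all $S,T\in\mathcal K$. The whole argument then reduces to understanding the two iterated limits of $m(V^{(i)}\star\tilde V^{(j)},\,W^{(i)}\star\tilde W^{(j)})$, where $\{V^{(i)}\},\{\tilde V^{(j)}\}$ and $\{W^{(i)}\},\{\tilde W^{(j)}\}$ are sequences in the unit ball of $\mathcal K$ for which both iterated limits are assumed to exist.

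The key tool is \Cref{star-lemma}, which turns weak convergence into norm convergence upon taking the Schur product with a fixed element. First I would pass to subsequences so that all four sequences converge weakly: since $\mathcal K$ is a Hilbert space, its closed unit ball is weakly sequentially compact, and this passage does not alter the values of the iterated limits (for each fixed outer index the inner limit is inherited by any subsequence, and then the outer limit is inherited in turn). Hence I may assume $V^{(i)}\overset{w}{\to}V$, $W^{(i)}\overset{w}{\to}W$, $\tilde V^{(j)}\overset{w}{\to}\tilde V$ and $\tilde W^{(j)}\overset{w}{\to}\tilde W$.

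Now I would evaluate each iterated limit by peeling off one index at a time, using commutativity of $\star$ so that the moving factor always meets a fixed factor. For the order $\lim_i\lim_j$, I fix $i$ and let $j\to\infty$: \Cref{star-lemma} gives $V^{(i)}\star\tilde V^{(j)}\to V^{(i)}\star\tilde V$ and $W^{(i)}\star\tilde W^{(j)}\to W^{(i)}\star\tilde W$ in norm, and since $\varphi$ and the inner product are norm-continuous this yields $\lim_j m=m(V^{(i)}\star\tilde V,\,W^{(i)}\star\tilde W)$. Letting $i\to\infty$ and applying \Cref{star-lemma} once more (now with $\tilde V,\tilde W$ as the fixed factors) gives $\lim_i\lim_j m=m(V\star\tilde V,\,W\star\tilde W)$. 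Running the identical computation with the indices swapped — first $i$, then $j$ — produces $\lim_j\lim_i m=m(V\star\tilde V,\,W\star\tilde W)$ as well. The two iterated limits therefore coincide, so $m$ is biregular and, by \Cref{ulger}, $\mathcal K\obp\mathcal K$ is Arens regular.

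I expect the only genuinely delicate point to be the bookkeeping around subsequences: one must verify that an iterated limit existing for the full sequences is unaffected by passing to weakly convergent subsequences in each index, so that the extracted weak limits $V,\tilde V,W,\tilde W$ legitimately govern the original limits. Everything else is a mechanical application of \Cref{star-lemma} combined with the continuity of $\varphi$ and of the inner product, and it is precisely the commutativity of the Schur product that lets this single lemma handle both the inner and the outer passages to the limit in either order.
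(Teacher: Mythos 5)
Your argument is correct and follows essentially the same route as the paper's own proof: reduce to biregularity via \"Ulger's criterion, represent $m$ through a bounded conjugate-linear map by Riesz, pass to weakly convergent subsequences, and then use \Cref{star-lemma} together with commutativity of $\star$ to peel off one index at a time, obtaining $m(V\star\tilde V, W\star\tilde W)$ for both iterated limits. The subsequence bookkeeping you flag is indeed the only point the paper leaves implicit, and your justification of it is sound.
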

\begin{proof}
Let $m:\mathcal K\times \mathcal K\to \mathbb C$ be a bounded bilinear
form. Then, $m(S,T)=\left<T,\phi(S)\right>$ for all $T, S \in \mathcal
K$, for some conjugate linear continuous operator $\phi: \mathcal K
\to \mathcal K$.
		
Let $\{S^{(i)}\},\{\tilde S^{(j)}\}$ and $\{T^{(i)}\},\{\tilde
T^{(j)}\}$ be two pair of sequences in the unit ball of $\mathcal K$
such that both the iterated limits exists. 
  Through a similar reasoning as in \ref{finite-biregular}, we can assume that
these sequences converges weakly to $S,\tilde S$ and $T,\tilde T$
respectively (in weak topology of $S_2(\mathcal H)$).

By \Cref{star-lemma}, $S^{(i)}\star \tilde S^{(j)}\longrightarrow
S\star \tilde S^{(j)}$; so, $\phi(S^{(i)}\star \tilde
S^{(j)})\longrightarrow\phi(S\star \tilde S^{(j)})$ in norm.  Hence,
\[
	\lim_i m(T^{(i)}\star \tilde T^{(j)},S^{(i)}\star \tilde S^{(j)})=m(T\star \tilde T^{(j)},S\star \tilde S^{(j)}).
\]
	Similarly,
\[
	\lim_j\lim_i m(T^{(i)}\star \tilde T^{(j)},S^{(i)}\star \tilde S^{(j)})=m(T\star \tilde T,S\star \tilde S).
\]
	By a symmetric argument 
        \[
	\lim_i\lim_j m(T^{(i)}\star \tilde T^{(j)},S^{(i)}\star \tilde S^{(j)})=m(T\star \tilde T,S\star \tilde S).
	\]
	Hence, $m$ is biregular and (by \Cref{ulger} again)
          $S_2(\mathcal H)\obp S_2(\mathcal H)$ is Arens regular
          (with respect to the Schur product).
\end{proof}

\paragraph{\bf Acknowledgement}

I would like to thank Dr.~Ved Prakash Gupta for a thorough proof
reading of the manuscript and giving valuable suggestions.


\end{document}